\newcommand{\cf}{{\em cf.}\ }
\newcommand{\ko}{\: , \;}
\newcommand{\ol}[1]{\overline{#1}}
\numberwithin{equation}{section}
\newtheorem{theorem}[subsection]{Theorem}
\newtheorem{classification-theorem}[subsection]{Classification Theorem}
\newtheorem{decomposition-theorem}[subsection]{Decomposition Theorem}
\newtheorem{proposition-definition}[subsection]{Proposition-Definition}
\newtheorem{periodicity-conjecture}[subsection]{Periodicity Conjecture}
\newtheorem{proposition}[subsection]{Proposition}
\newtheorem{conjecture}[subsection]{Conjecture}
\newtheorem{remark}[subsection]{Remark}
\renewcommand{\mod}{\mathrm{mod}\,}
\newcommand{\Hom}{\mathrm{Hom}}
\newcommand{\Ext}{\mathrm{Ext}}
\newcommand{\C}{\mathbb{C}}
\newcommand{\A}{\mathbb{A}}
\renewcommand{\P}{\mathbb{P}}
\newcommand{\iso}{\xrightarrow{_\sim}}
\newcommand{\End}{\mathrm{End}}
\newcommand{\ca}{{\mathcal A}}
\newcommand{\cc}{{\mathcal C}}
\newcommand{\cF}{{\mathcal F}}
\newcommand{\cR}{{\mathcal R}}
\newcommand{\ct}{{\mathcal T}}
\newcommand{\cu}{{\mathcal U}}
\renewcommand{\phi}{\varphi}
\newcommand{\bt}{\bullet}
\renewcommand{\tilde}[1]{\widetilde{#1}}
\begin{document}

\date{April 14, 2022}

\title{On Leclerc's conjectural cluster structures for open Richardson varieties}

\author{Peigen Cao}
\address{
     Universit\'e Paris Cit\'e,
    UFR de Math\'ematiques,
    CNRS,
   Institut de Math\'ematiques de Jussieu--Paris Rive Gauche, IMJ-PRG,
    B\^{a}timent Sophie Germain,
    75205 Paris Cedex 13,
    France \newline %
Present address: Einstein Institute of Mathematics, Edmond J. Safra Campus, The Hebrew University of Jerusalem, Jerusalem
91904, Israel
    }
    
    \email{peigencao@126.com}

\author{Bernhard Keller}
\address{
Universit\'e Paris Cit\'e, Sorbonne Universit\'e, CNRS IMJ--PRG \\
8 place Aur\'elie Nemours, 75013 Paris, France}
\email{bernhard.keller@imj-prg.fr}
\urladdr{https://webusers.imj-prg.fr/~bernhard.keller/}

\begin{abstract} In 2016, Leclerc constructed conjectural cluster structures on 
open Richardson varieties using representations of preprojective algebras.
A variant with more explicit seeds was obtained by M\'enard in his thesis.
We show that M\'enard's seeds do yield {\em upper} cluster algebra structures
on open Richardson varieties and discuss the problems that remain 
in order to prove that they are cluster algebra structures.
\end{abstract}

\keywords{Cluster algebra, flag variety, open Richardson variety,
preprojective algebra}

\subjclass[2020]{13F60, 14M16}


\maketitle

\section{Introduction}
\label{s:Introduction}

Open Richardson varieties were introduced by Kazhdan--Lusztig
\cite{KazhdanLusztig88}. They
are relevant for Kazhdan--Lusztig polynomials
\cite{KazhdanLusztig88, Deodhar85}, 
the study of total positivity in the Grassmannian 
\cite{Lusztig98pos, Rietsch99, MarshRietsch04, RietschWilliams08}, 
the Poisson geometry of the flag variety
\cite{GoodearlYakimov09} and many other subjects.

It is natural to ask whether open Richardson varieties carry cluster structures
compatible with total positivity and Poisson geometry. In 2016, Leclerc
\cite{Leclerc16} gave a conjectural positive answer using representations
of preprojective algebras. His conjecture was slightly modified and made
more explicit by M\'enard \cite{Menard21, Menard22}. In this note, 
based on \cite{Keller21}, we show that M\'enard's seed does provide 
an {\em upper} cluster algebra structure on each open Richardson variety.
We also discuss the problems that remain 
in order to prove that it is a cluster algebra structure.

In type A, a (possibly different) upper cluster algebra structure was obtained
using completely different methods by Gracie Ingermanson
in her thesis \cite{Ingermanson19} under the supervision
of David Speyer. Open Richardson varieties are special cases of braid varieties.
In this more general framework, much stronger results will be contained
in the forthcoming work of two groups of mathematicians:
\begin{itemize}
\item[-] Roger Casals, Eugene Gorsky, Mikhail Gorsky, Ian Le, Linhui Shen,
and Jos\'e Simental in \cite{CGGLSS22} and
\item[-] Pavel Galashin, Thomas Lam and Melissa Sherman-Bennett
in \cite{GalashinLamShermanBennett22a, GalashinLamShermanBennett22b}, 
cf.~also \cite{GalashinLam22}.
\end{itemize}

\subsection*{Acknowledgments} B.~Keller thanks Bernard Leclerc
and Etienne M\'enard for stimulating discussions. P.~Cao is supported by the ERC Grant No. 669655.
We are grateful to the authors of \cite{CGGLSS22} for providing us with a preliminary 
version of their preprint.

\section{Open Richardson varieties}
\label{s:Open Richardson varieties}

Let $\Delta$ be a simply laced Dynkin diagram, for example the diagram $A_4$
given by a chain of length $4$.
Let $G$ be the associated simple, simply connected complex algebraic group,
for example $Sl_5(\C)$. Let $B, B^- \subseteq G$ be opposite Borel subgroups,
for example the subgroups of upper/lower triangular matrices in $Sl_5(\C)$.
Let $H=B\cap B^-$ be the associated maximal torus and $W=N_B(H)/H$ the
Weyl group, for example the subgroup $H\subseteq Sl_5(\C)$ of diagonal
matrices and the symmetric group $S_5$. Let $X=B^-\setminus G$ be the flag variety
and $\pi: G \to X$ the canonical projection, for example the variety of
complete flags
\[
0=F_0 \subset F_1 \subset \cdots \subset F_5=\C^5
\]
in the space of rows $\C^5$ with its natural right action by $Sl_5(\C)$. 
We have the Schubert decomposition
\[
X = \coprod_{w\in W} C_w
\]
into the Schubert cells $C_w = \pi(wB^-)$, which are affine spaces of
dimension equal to the length $l(w)$. Dually, we have the opposite Schubert
decomposition
\[
X=\coprod_{v\in W} C^v
\]
into the opposite Schubert cells $C^v=\pi(vB)$, which are affine spaces
of dimension equal to $l(w_0)-l(v)$, where $w_0$ is the longest element of $W$.
For a pair of Weyl group elements $(v,w)$, the {\em open Richardson variety
$\cR_{v,w}$} is defined as the intersection $C^v\cap C_w$. It is non-empty
if and only if $v\leq w$ for the Bruhat order and in this case, it is a smooth,
irreducible, locally closed subvariety of $C_w$ of dimension $l(w)-l(v)$
which is affine but not an affine space, in general.
In the minimal example where $G=Sl_2(\C)$, the flag variety $X=B^-\setminus G$
identifies with the projective line $\P^1(\C)$ by the map sending a
$2\times 2$-matrix to the line generated by its first row. Under this identification, if $s$ generates the Weyl group $S_2$, we have
\[
\xymatrix@R=0cm{
C_e=\{0\}  & C_s = \P^1(\C)\setminus \{0\} & \\
C^e=\A^1(\C) &  C^s=\{\infty\} &  \\
\cR_{e,e}=\{0\} &  \cR_{e,s} = \P^1(\C)\setminus \{0,\infty\} & \cR_{s,s}={\infty}.
}
\]
We refer to section~\ref{s:Introduction} for the most relevant
references on open Richardson varieties. 
It is natural to ask whether they carry cluster structures
compatible with total positivity and Poisson geometry. In 2016, Leclerc
\cite{Leclerc16} gave a conjectural positive answer using representations
of preprojective algebras. There is a direct link between such representations
and the coordinate algebra $\C[N]$ of the unipotent radical $N$ of $B$.
We will recall it in the next section. In turn, the coordinate algebra $\C[N]$
is linked to the coordinate algebra $\C[\cR_{v,w}]$ of the affine
variety $\cR_{v,w}$ as follows: Put $N(v)=N\cap v^{-1}N^-v$, where
$N^-$ is the unipotent radical of $B^-$, and $N'(w)=N\cap w^{-1}Nw$.
 Let $\C[N]^{v,w}\subseteq \C[N]$ be the subalgebra of double invariants
\[
\C[N]^{v,w} = \mbox{}^{N(v)}\C[N]^{N'(w)}.
\]
Let $M_{v,w}$ be the multiplicative subset of $\C[N]^{v,w}$ generated
by the irreducible factors of
\[
D_{v,w} = \prod_{i\in I} \Delta_{v^{-1}(\varpi_i), w^{-1}(\varpi_i)} \ko
\]
where $I$ is the set of vertices of the Dynkin diagram and
the factors of the product are generalized minors, \cf section~2.2
of \cite{Leclerc16}. In section~2.8 of \cite{Leclerc16}, Leclerc
constructs an algebra isomorphism
\[
\C[N]^{v,w}[M_{v,w}^{-1}] \iso \C[\cR_{v,w}].
\]
We thus obtain the following diagram summing up the relations
between the coordinate algebras $\C[N]$ and $\C[\cR_{v,w}]$
\[
\begin{tikzcd}
    \C[N]^{v,w} \arrow[r,hook] \arrow[d, "\mbox{\small can}"] & \C[N] &  \\
    \C[N]^{v,w}[M_{v,w}^{-1}]\arrow[r,"\sim"]  & \C[\cR_{v,w}].
\end{tikzcd}
\]

\section{Additive categorification and Leclerc's conjecture}
\label{s:Additive categorification}

\subsection{The case of $N \iso C_{w_0}$} \label{ss:case of N}
We keep the notations and assumptions of section~\ref{s:Open Richardson varieties}.
Let $\Lambda$ be the preprojective algebra of $\Delta$ over $k=\C$. 
For example, if $\Delta$ is the diagram $A_4$
\[
\begin{tikzcd} 1 \ar[r, no head] & 2 \ar[r, no head] & 3 \ar[r, no head] & 4
\end{tikzcd} \ko
\]
then, up to isomorphism, $\Lambda$ is the $k$-algebra presented by the quiver
\[
\begin{tikzcd}
1 \ar[r, shift left=1ex, "\alpha"] & 
2 \ar[l, shift left=1ex, "\alpha^*"]  \ar[r, shift left=1ex, "\beta"] &
3 \ar[l, shift left=1ex, "\beta^*"]  \ar[r, shift left=1ex, "\gamma"] &
4 \ar[l, shift left=1ex, "\gamma^*"]
\end{tikzcd}
\]
with relations
\[
-\alpha^* \alpha=0 \ko \alpha\alpha^* - \beta^* \beta =0 \ko \beta\beta^* - \gamma^* \gamma=0 \ko
\gamma \gamma^*=0.
\]
Let us recall some important properties of $\Lambda$ and the category $\mod \Lambda$ of
$k$-finite-dimensional (right) $\Lambda$-modules:
\begin{itemize}
\item[a)] The algebra $\Lambda$ is finite-dimensional and selfinjective so that the category
$\mod\Lambda$ becomes a Frobenius category. 
\item[b)] As shown by Crawley-Boevey \cite{CrawleyBoevey00},
for finite-dimensional $\Lambda$-modules $L$ and $M$, we have a bifunctorial
isomorphism
\[
D\Ext^1_\Lambda(L,M) \iso \Ext_\Lambda(M,L) \ko
\]
where $D=\Hom_k(?,k)$ is the duality over the ground field. This means that
the Frobenius category $\mod\Lambda$ is {\em stably $2$-Calabi--Yau}. 
\item[c)] The category $\mod\Lambda$ contains (basic) {\em cluster-tilting objects}
\[
T=T_1 \oplus  \cdots \oplus T_m \ko
\]
where the $T_i$ are indecomposable (and pairwise non isomorphic) and $m$ is the length of the longest element $w_0$. These can
be {\em mutated} at each non projective summand $T_i$.
\item[d)] Each reduced expression $\ol{w_0}$ of the longest element $w_0$ yields
a canonical cluster-tilting object $T_{\ol{w_0}}$ which, up to mutation, is independent
of the choice of $\ol{w_0}$.
\item[e)] We have a canonical {\em cluster-character}
\[
\phi: \mod\Lambda \to \C[N]
\]
constructed by Geiss--Leclerc--Schr\"oer \cite{GeissLeclercSchroeer06} using 
work of Lusztig \cite{Lusztig00}.
\end{itemize}
For an ice quiver $Q$, let us write $\ca^+(Q)$ for the cluster algebra with {\em non
invertible coefficients} associated with $Q$ and $\ca(Q)$ for the cluster algebra
with {\em invertible coefficients} associated with $Q$. We always denote the
initial cluster variables by $x_i$, $i\in Q_0$.

\begin{theorem}[\cite{GeissLeclercSchroeer06}] If
$
T=T_1 \oplus  \cdots \oplus T_m 
$
is a basic cluster-tilting object mutation-equivalent to $T_{\ol{w_0}}$ and
$Q(T)$ is the ice quiver of its endomorphism algebra $\End_\Lambda(T)$ with
frozen vertices corresponding to the projective-injective indecomposable summands of $T$,
then $(Q(T), (\phi(T_i)))$ is an initial seed for a cluster structure on $\C[N]$.
Moreover, the algebra isomorphism morphism taking $x_i$ to $\phi(T_i)$ fits into a commutative
diagram
\[
\begin{tikzcd}
\ca^+(Q(T)) \arrow[d] \arrow[r, "\sim"] & \C[N] \arrow[d] \\
\ca(Q(T)) \arrow[r,"\sim"] & \C[\cR_{e,w_0}].
\end{tikzcd}
\]
\end{theorem}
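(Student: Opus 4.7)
The plan is to reduce the statement to two categorical inputs: (i) a multiplication formula for $\phi$ that mirrors cluster exchange, and (ii) an initial case where the cluster algebra structure is already known, and then propagate by mutation.

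First I would verify that $\phi$ converts each exchange triangle
\[
T_k^* \to B \to T_k \to \Sigma T_k^* \quad \text{and} \quad T_k \to B' \to T_k^* \to \Sigma T_k
\]
in the stable category $\ul{\mod}\Lambda$ at a mutable summand $T_k$ of $T$ into the cluster exchange relation
\[
\phi(T_k)\,\phi(T_k^*) = \phi(B) + \phi(B').
\]
This is the heart of the categorification: combining Crawley--Boevey's $2$-Calabi--Yau duality from item b) with the constructible-functions definition of $\phi$ from item e) gives the required two-term formula (Geiss--Leclerc--Schr\"oer's cluster character identity). Coupled with the fact that mutation of cluster-tilting objects in a stably $2$-CY Frobenius category realizes Fomin--Zelevinsky mutation at the level of quivers $Q(T)$ (with frozen vertices preserved since projective-injective summands are never mutated), this shows that under $\phi$ the ``categorical'' seeds $(Q(T), (T_i))$ transform exactly as algebraic seeds.

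Next I would establish the initial case $T=T_{\ol{w_0}}$. Here the summands $T_i$ are Lusztig's modules attached to the reduced word, and one checks directly, using Lusztig's work \cite{Lusztig00} invoked in item e), that the $\phi(T_i)$ are algebraically independent and, together with the exchange relations provided by Step 1, generate the whole coordinate ring $\C[N]$. This identifies $\ca^+(Q(T_{\ol{w_0}}))$ with $\C[N]$ via $x_i \mapsto \phi(T_i)$. By Step 1, mutating the cluster-tilting object induces the corresponding cluster mutation, so the isomorphism $\ca^+(Q(T))\iso\C[N]$ transfers to every $T$ mutation-equivalent to $T_{\ol{w_0}}$.

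Finally, for the lower square of the diagram I would localize: the frozen indices of $Q(T)$ correspond to the projective-injective summands of $T$, and by a direct computation $\phi$ sends them to the generalized minors $\Delta_{\varpi_i, w_0^{-1}(\varpi_i)}$, which are exactly the irreducible factors of $D_{e,w_0}$. Inverting the initial frozen variables $x_i$ turns $\ca^+(Q(T))$ into $\ca(Q(T))$ on the left, and inverting the minors $\phi(T_i)$ turns $\C[N]$ into $\C[N]^{e,w_0}[M_{e,w_0}^{-1}]$ on the right; by Leclerc's isomorphism recalled in Section~\ref{s:Open Richardson varieties}, the latter is $\C[\cR_{e,w_0}]$. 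Since both vertical maps are the canonical localizations at matching elements, the square commutes.

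The main obstacle is Step~2: proving that $\phi(T_{\ol{w_0}})$ genuinely \emph{generates} $\C[N]$ as a cluster algebra (not merely embeds inside it). This requires the nontrivial input from Lusztig's semicanonical basis and the identification of the relevant $\phi(T_i)$ with known generators of $\C[N]$; by contrast Steps~1, 3 and 4 are formal consequences of the $2$-CY property and of Leclerc's localization isomorphism.
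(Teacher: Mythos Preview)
The paper does not prove this theorem: it is quoted as a result of Geiss--Leclerc--Schr\"oer and attributed to \cite{GeissLeclercSchroeer06} without further argument, so there is no proof in the paper to compare your proposal against.

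That said, your outline is a faithful high-level summary of the strategy actually used in \cite{GeissLeclercSchroeer06}. Step~1 (the two-term multiplication formula for $\phi$), the identification of quiver mutation with Fomin--Zelevinsky mutation, and the localization in Step~4 are all correct and match the original argument. You also correctly isolate Step~2 as the substantive input: proving surjectivity of $\tilde\phi:\ca^+(Q(T_{\ol{w_0}}))\to\C[N]$ requires Lusztig's semicanonical basis and is not a ``direct check''.

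Two small points would strengthen your sketch. First, the two-term identity $\phi(T_k)\phi(T_k^*)=\phi(B)+\phi(B')$ in Step~1 needs $\dim\Ext^1_\Lambda(T_k,T_k^*)=1$; the general multiplication formula for $\phi$ has more terms, and it is the cluster-tilting hypothesis that forces the extension space to be one-dimensional. Second, for mutation of cluster-tilting objects to realize FZ quiver mutation you must know that $Q(T)$ has no loops or $2$-cycles; this is a separate (and nontrivial) input in the Frobenius $2$-CY setting, established in \cite{GeissLeclercSchroeer06} and \cite{BuanIyamaReitenScott09}.
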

For example, suppose that $\Delta=A_4$ and 
\[
\ol{w_0}=s_1 s_2 s_3 s_4 s_1 s_2 s_3 s_1 s_2 s_1.
\]
We refer to \cite{GeissLeclercSchroeer07d} for the construction of
the canonical cluster-tilting object $T=T_{\ol{w_0}}$ and the computation
of the corresponding ice quiver $Q(T)$ given by
\[
\begin{tikzcd}
 & & & \square \ar[rd] & & & \\
 & & \square \ar[ru] \ar[rd] & & \bt \ar[ll] \ar[rd] & & \\
 & \square \ar[ru] \ar[rd] & & \bt \ar[ru] \ar[ll] \ar[rd] & & \bt \ar[ll] \ar[rd] &  \\
 \square \ar[ru] & & \bt \ar[ll] \ar[ru] & & \bt \ar[ll] \ar[ru] & & \bt \ar[ll]
 \end{tikzcd}
\]
where the squares denote frozen vertices.
In this case, the subgroup $N$ is formed by the upper unitriangular
$4\times 4$-matrices and the isomorphism
\[
\ca^+(Q(T)) \iso \C[N]
\]
sends the $x_i$ to certain maximal minors.

\subsection{Case of $C_w$} \label{ss:Case of C_w}
Recall that a {\em torsion pair} in $\mod\Lambda$ is a pair $(\ct,\cF)$ of
strictly full subcategories such that we have
\begin{itemize}
\item[a)] $\Hom(\ct,\cF)=0$ and
\item[b)] for each $M\in\mod\Lambda$, there is a short exact sequence
\[
\begin{tikzcd}
0 \ar[r] & M_\ct \ar[r] & M \ar[r] & M^\cF \ar[r] & 0 \ko
\end{tikzcd}
\]
where $M_\ct$ belongs to $\ct$ and $M^\cF$ to $\cF$.
\end{itemize}
Here the submodule $M_\ct$ is called the {\em torsion part} and
$M^\cF$ the {\em torsion-free part} of $M$. The subcategory
$\ct$ is called a {\em torsion class} and $\cF$ a {\em torsion-free class}.
Torsion classes ordered by inclusion form a poset. 

For two elements $v$ and $w$ of the Weyl group $W$, we write
$v \leq_R w$ if $w$ admits a reduced expression $\ol{w}$ equal
to the concatenation $\ol{v}\ol{x}$ of a reduced expression $\ol{v}$
for $v$ with a reduced word $\ol{x}$. The relation $\leq_R$ is
called the {\em weak right order} on $W$. Clearly, the relation
$v\leq_R w$ implies that $v\leq w$ in the Bruhat order but the
converse does not hold in general.

Recall the a subcategory $\cc$ of $\mod\Lambda$ is {\em functorially finite}
if, for each $M\in \mod\Lambda$, there are morphisms $C_0 \to M \to C^0$
with $C_0, C^0\in\cc$ such that for each $C\in \cc$, each morphism $C \to M$ 
factors through $C_0 \to M$ and each morphism $M \to C$ factors through
$M \to C^0$.

\begin{theorem}[\cite{Mizuno14}] We have a canonical isomorphism
of posets $w \mapsto \cc_w$ from $(W,\leq_R)$ to the poset of
functorially finite torsion classes of $\mod\Lambda$.
\end{theorem}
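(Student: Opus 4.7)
The plan is to invoke Adachi--Iyama--Reiten's $\tau$-tilting theory, which supplies a bijection between basic support $\tau$-tilting $\Lambda$-modules $M$ and functorially finite torsion classes of $\mod\Lambda$ via $M \mapsto \mathrm{Fac}(M)$, where $\mathrm{Fac}(M)$ denotes the subcategory of quotients of finite direct sums of copies of $M$. It then suffices to produce an order isomorphism between $(W,\leq_R)$ and the set of basic support $\tau$-tilting $\Lambda$-modules, where the latter inherits an order from inclusion of the associated torsion classes.

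First, I would introduce the two-sided ideals $I_i = \Lambda(1-e_i)\Lambda$ attached to the primitive idempotents $e_i$ indexed by the vertices $i$ of $\Delta$. As is known from work of Iyama--Reiten and Buan--Iyama--Reiten--Scott, these ideals satisfy the braid relations of $W$, so that for any reduced expression $\ol{w} = s_{i_1}\cdots s_{i_l}$ of $w$ the product $I_{i_1}\cdots I_{i_l}$ depends only on $w$; denote it by $I_w$. Each $I_w$ has finite codimension in $\Lambda$, and I would set
\[
\cc_w := \mathrm{Fac}(\Lambda/I_w).
\]
This is a functorially finite torsion class since $\Lambda/I_w$ is a finite-dimensional $\Lambda$-module.

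The easy half of the theorem is then immediate: if $v \leq_R w$ with $w = vx$ and $l(w) = l(v) + l(x)$, then $I_w = I_v \cdot I_x \subseteq I_v$, and the canonical surjection $\Lambda/I_w \twoheadrightarrow \Lambda/I_v$ yields $\cc_v \subseteq \cc_w$. Injectivity of $w \mapsto \cc_w$ can be obtained by recovering $w$ from the dimension vector of $\Lambda/I_w$, by induction on $l(w)$ using the recursive description of the quotients coming from the braid relations.

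The main obstacle is surjectivity together with strict order preservation. The strategy is to show that AIR mutation of the support $\tau$-tilting module attached to $\cc_w$ at an appropriate indecomposable summand produces the support $\tau$-tilting module attached to $\cc_{ws_i}$ for a suitable simple reflection $s_i$, and that every covering relation in $(W,\leq_R)$ is realized in this way. Since $\Lambda$ is representation-finite, being the preprojective algebra of a Dynkin diagram, the set of basic support $\tau$-tilting modules is finite and its mutation graph is connected by AIR; once this graph is matched with the right Cayley graph of $W$ with respect to simple reflections, surjectivity follows. The technical heart is thus the compatibility of the ideal multiplication $I_w \mapsto I_w I_i$ with AIR mutation, which rests on an explicit analysis of the indecomposable direct summands of $\Lambda/I_w$ and their $\tau$-rigidity.
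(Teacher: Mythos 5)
The paper states this result with a citation to Mizuno and gives no proof, so the only meaningful comparison is with Mizuno's actual argument. Your overall architecture --- the ideals $I_i=\Lambda(1-e_i)\Lambda$, well-definedness of $I_w$ via the braid relations of Iyama--Reiten/Buan--Iyama--Reiten--Scott, and matching AIR mutation of support $\tau$-tilting modules with the Cayley graph of $(W,\leq_R)$ --- is indeed the route Mizuno takes. But two of your steps fail as written.

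First, the definition $\cc_w:=\mathrm{Fac}(\Lambda/I_w)$ is wrong: this subcategory need not be extension-closed, hence need not be a torsion class at all. Already for $\Delta=A_2$ and $w=s_1s_2$ one computes $I_w\cong S_1$ and $\Lambda/I_w\cong P_1\oplus S_2$, so that $\mathrm{Fac}(\Lambda/I_w)=\add(P_1\oplus S_1\oplus S_2)$, which does not contain the non-split extension $P_2$ of $S_2$ by $S_1$. The module that is support $\tau$-tilting is $I_w$ itself, and the associated torsion class is $\mathrm{Fac}(I_w)$; but then $v\leq_R w$ gives $I_w\subseteq I_v$ and $\mathrm{Fac}(I_w)\subseteq\mathrm{Fac}(I_v)$, i.e.\ an \emph{anti}-isomorphism of posets (with $e\mapsto\mod\Lambda$ and $w_0\mapsto 0$), so to obtain the isomorphism asserted in the statement (where $\cc_e=0$ and $\cc_{w_0}=\mod\Lambda$) you must renormalize, e.g.\ by $w\mapsto I_{w^{-1}w_0}$ or by passing through the torsion-free classes $\mathrm{Sub}(\Lambda/I_w)$. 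Relatedly, ``functorially finite because $\Lambda/I_w$ is finite-dimensional'' is a non sequitur: $\mathrm{Fac}(M)$ of a finite-dimensional module is a functorially finite torsion class when $M$ is $\tau$-rigid, and proving the $\tau$-rigidity of $I_w$ is precisely part of the work.

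Second, the preprojective algebra of a Dynkin diagram is \emph{not} representation-finite in general (this holds only in types $A_1,\dots,A_4$; type $A_5$ and $D_4$ are tame, the rest wild), so you cannot deduce finiteness of the set of basic support $\tau$-tilting modules from representation-finiteness. That finiteness is essentially equivalent to the surjectivity you are trying to prove. The correct substitute is AIR's theorem that a finite connected component of the exchange graph of support $\tau$-tilting modules is the whole graph, applied to the finite, mutation-closed family $\{I_w\}_{w\in W}$ --- which requires first establishing the compatibility of $I_w\mapsto I_wI_i$ with mutation, the step you correctly identify as the technical heart but leave entirely open.
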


\begin{theorem}[\cite{DemonetIyamaJasso19}] If $A$ is a finite-dimensional
$k$-algebra such that $\mod A$ admits only finitely many functorially finite torsion classes,
then each torsion class is functorially finite.
\end{theorem}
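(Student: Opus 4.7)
The plan is to work through the Adachi--Iyama--Reiten bijection between functorially finite torsion classes in $\mod A$ and basic support $\tau$-tilting $A$-modules, together with the brick labelling of the Hasse quiver of the lattice of torsion classes due to Demonet--Iyama--Reading--Reiten--Thomas. The hypothesis supplies finitely many basic support $\tau$-tilting modules; the target is to promote this to finiteness of the whole lattice of torsion classes and thereby force every torsion class to be functorially finite.

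First I would recall that the poset of torsion classes is a complete lattice and that each edge $\ct' \to \ct$ in its Hasse quiver carries a canonical brick label $S(\ct',\ct)$, \ie a module whose endomorphism algebra is a division ring, with the property that distinct arrows leaving a common vertex have pairwise non-isomorphic labels. For a functorially finite $\ct$ corresponding to a support $\tau$-tilting module $M_\ct$, the arrows of the Hasse quiver incident to $\ct$ correspond bijectively to the mutations of $M_\ct$ and are therefore finite in number, contributing only finitely many brick labels at $\ct$.

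Second, I would establish the central combinatorial fact that finitely many functorially finite torsion classes force finitely many bricks up to isomorphism. The key is to show that every brick $S$ occurs as the label of some Hasse arrow joining two functorially finite torsion classes. For this I would take the smallest torsion class containing $S$ and the largest torsion class in the left perpendicular of $S$, and verify that both are functorially finite by a $\tau$-tilting reduction to a suitable quotient algebra. Combined with the previous paragraph, the finite Hasse subquiver on functorially finite torsion classes then carries only finitely many distinct brick labels, and since every brick appears as one of them, only finitely many bricks can exist up to isomorphism.

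Finally, I would use that each torsion class $\ct$ is recovered as the smallest torsion class containing its set of bricks, so finitely many bricks force the lattice of torsion classes to be finite altogether. Combined with the Adachi--Iyama--Reiten bijection and an induction along the finite Hasse quiver, this forces every torsion class to be functorially finite. The main obstacle is the brick occurrence step of paragraph three: realising an arbitrary brick as a label between two functorially finite torsion classes requires the full machinery of $\tau$-tilting reduction and of the wide subcategory attached to a functorially finite torsion class, and this is the delicate part of the argument.
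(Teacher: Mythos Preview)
The paper does not prove this statement: it is quoted from \cite{DemonetIyamaJasso19} and used as a black box, together with Mizuno's bijection, to conclude that all torsion classes in $\mod\Lambda$ are parametrised by $W$. There is therefore no proof in the present paper to compare your sketch against; one can only weigh it against the original argument of Demonet--Iyama--Jasso.

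Your outline invokes the right machinery (the Adachi--Iyama--Reiten bijection, brick labels on the Hasse quiver, $\tau$-tilting reduction), but the third paragraph hides a genuine circularity that you have not dispelled. For an arbitrary brick $S$, the minimal torsion class $\ct(S)=\mathrm{Filt}(\mathrm{Fac}\,S)$ is functorially finite precisely when $S$ is a so-called \emph{left-finite} brick, and this fails in general; establishing that \emph{every} brick is left-finite under the hypothesis is essentially equivalent to the theorem itself. Saying ``verify by a $\tau$-tilting reduction to a suitable quotient algebra'' does not break the circle unless you name the algebra and explain why the reduction is available \emph{before} the conclusion is known. The proof in \cite{DemonetIyamaJasso19} sidesteps this by running more directly: given an arbitrary torsion class $\ct$, take a maximal functorially finite $\ct'\subseteq\ct$ (this exists by the finiteness hypothesis); the Adachi--Iyama--Reiten mutation theory shows that every cover of $\ct'$ in the full lattice $\mathrm{tors}\,A$ is again functorially finite, and whenever $\ct'\subsetneq\ct$ one such cover lies below $\ct$. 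This forces $\ct'=\ct$ without any detour through a count of bricks, so your fourth paragraph becomes unnecessary.
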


By combining the two theorems, we see that all torsion classes of
$\mod\Lambda$ are functorially finite and that they are canonically parametrized
by the elements of $W$ via the bijection $w \mapsto \cc_w$.

\begin{theorem}[\cite{BuanIyamaReitenScott09}]  \label{thm:BIRS09}
If $\cc\subseteq \mod\Lambda$ is
an extension-closed, functorially finite full subcategory, it becomes a Frobenius category
(for the exact structure inherited from $\mod\Lambda$) which is stably $2$-Calabi--Yau
and has a cluster structure. In particular, the category $\cc$ contains a cluster-tilting object.
\end{theorem}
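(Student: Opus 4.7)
The plan is to follow the Buan--Iyama--Reiten--Scott strategy of transferring the Frobenius and $2$-Calabi--Yau properties from $\mod\Lambda$ to $\cc$ and then producing a cluster-tilting object.

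First I would equip $\cc$ with the exact structure inherited from $\mod\Lambda$: since $\cc$ is extension-closed, the short exact sequences of $\mod\Lambda$ whose three terms all lie in $\cc$ form an admissible class of conflations, and $\Ext^1_\cc(X, Y) = \Ext^1_\Lambda(X, Y)$ for $X, Y \in \cc$. To see that $\cc$ has enough Ext-projectives I would use functorial finiteness: given $M \in \cc$, embed $M$ into a projective-injective $\Lambda$-module $I_M$ (using selfinjectivity of $\Lambda$), form a right $\cc$-approximation $C \to I_M$, and pull back along $M \hookrightarrow I_M$ to obtain a conflation $0 \to K \to C' \to M \to 0$ in $\cc$ with $C'$ Ext-projective. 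A dual construction yields enough Ext-injectives. The $2$-Calabi--Yau duality $D\Ext^1_\Lambda(X, Y) \iso \Ext^1_\Lambda(Y, X)$ forces the Ext-projectives and the Ext-injectives of $\cc$ to coincide, since $\Ext^1_\Lambda(P, \cc) = 0$ iff $\Ext^1_\Lambda(\cc, P) = 0$, so $\cc$ is Frobenius. The stable category $\underline{\cc}$ then inherits the $2$-Calabi--Yau property: the Crawley-Boevey duality kills morphisms factoring through Ext-projective-injective objects on both sides and therefore descends to the quotient, and under the standard identification $\Ext^1_\cc(X, Y) \cong \Hom_{\underline{\cc}}(X, \Si Y)$ it becomes the desired bifunctorial pairing.

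The main obstacle is producing an explicit cluster-tilting object. In the cases of interest here, namely $\cc = \cc_w$ for a Weyl group element $w$, I would construct one from a reduced expression $\ol{w} = s_{i_1}\cdots s_{i_\ell}$, generalizing the construction of $T_{\ol{w_0}}$ recalled in subsection~\ref{ss:case of N}: take the direct sum $T_{\ol{w}} = \bigoplus_{k=1}^{\ell} M_k$ of the $\Lambda$-modules obtained by iterated application of the functors attached to the simple reflections along $\ol{w}$. Rigidity $\Ext^1_\Lambda(M_j, M_k) = 0$ follows from a direct computation inside $\mod\Lambda$, and maximality from the equality of the number of indecomposable summands with the length $\ell$ of $w$. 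Finally, the cluster structure is a formal consequence of Iyama--Yoshino mutation in stably $2$-Calabi--Yau Frobenius categories: each non-Ext-projective-injective indecomposable summand of $T_{\ol{w}}$ can be mutated into another, the resulting ice quivers have no loops or $2$-cycles at mutable vertices, and the compatibility axioms for a cluster structure are thereby verified.
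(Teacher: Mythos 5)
This theorem is stated in the paper as a quotation from \cite{BuanIyamaReitenScott09}; the paper contains no proof of it, so your sketch can only be measured against the source. Your overall strategy is indeed the one used there: inherit the exact structure, use approximations to get enough Ext-projectives and Ext-injectives, use the Crawley-Boevey duality to see that these two classes coincide (this part of your argument is correct and is the key point), and deduce that the stable category is $2$-Calabi--Yau. But the approximation step is garbled as written. A right $\cc$-approximation $C \to I_M$ of the injective hull, pulled back along $M \hookrightarrow I_M$, does not yield a conflation $0 \to K \to C' \to M \to 0$ in $\cc$: the approximation need not be surjective, neither the pullback nor its kernel need lie in $\cc$, and no reason is given why the middle term would be Ext-projective. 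The correct Auslander--Smal\o-style argument \cite{AuslanderSmaloe81} approximates the \emph{cosyzygy}: from $0 \to M \to I_M \to N \to 0$ take a right $\cc$-approximation $C_N \to N$ and form the pullback $0 \to M \to E \to C_N \to 0$; extension-closure puts $E$ in $\cc$, and the approximation property, fed into the long exact sequence, gives $\Ext^1_\Lambda(C,E)=0$ for all $C \in \cc$, so $M$ embeds into an Ext-injective; dually for Ext-projectives via a left approximation of the syzygy and a pushout.

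The more serious gap concerns the cluster-tilting object. You construct one only for $\cc = \cc_w$ via a reduced expression $\ol{w}$, but the theorem asserts existence for an \emph{arbitrary} extension-closed functorially finite subcategory, and the paper invokes it precisely for $\cc_{v,w} = \cc^v \cap \cc_w$, which is in general not of the form $\cc_{w'}$ (indeed, producing an explicit cluster-tilting object there is the content of Leclerc's Proposition~3.12 in \cite{Leclerc16} and of M\'enard's theorem, which would be vacuous if your construction sufficed). So the "in particular" clause, in the generality actually needed, is not established by your argument. Finally, the absence of loops and $2$-cycles at mutable vertices is not a formal consequence of Iyama--Yoshino mutation in a stably $2$-Calabi--Yau category; it is one of the substantive verifications in \cite{BuanIyamaReitenScott09} and uses specific properties of the preprojective algebra, so it cannot simply be asserted.
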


For an ice quiver $Q$, let we write $\cu^+(Q)$ for the upper cluster algebra with
{\em non invertible} coefficients associated with $Q$.

We say that an ice quiver $Q$ has a {\em reddening sequence} if the non frozen part of $Q$ has a reddening 
sequence in the sense of \cite{Keller11c}.

\begin{theorem}[\cite{GeissLeclercSchroeer07d, BuanIyamaReitenScott09}]
Fix $w\in W$ and let $\ol{w}$ be a reduced expression for $w$.
\begin{itemize}
\item[a)] There is a canonical cluster-tilting object $T_{\ol{w}}$ of $\cc_w$ which, up to
mutation, only depends on $w$.
\item[b)] The ice quiver $Q_{\ol{w}}$ of the endomorphism algebra of $T_{\ol{w}}$ has
an explicit description (up to the determination of the frozen subquiver).
\item[c)] The ice quiver $Q_{\ol{w}}$ has a reddening sequence \cite{Keller11c} and
we have $\ca^+(Q_{\ol{w}})= \cu^+(Q_{\ol{w}})$.
\end{itemize}
\end{theorem}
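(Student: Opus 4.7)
The plan is to use the torsion-theoretic techniques made available by Mizuno's theorem, which identifies $\cc_w$ as a functorially finite torsion class in $\mod\Lambda$, together with Theorem~\ref{thm:BIRS09}, which endows $\cc_w$ with the structure of a stably 2-Calabi--Yau Frobenius category admitting a cluster structure. What remains is to exhibit an explicit cluster-tilting object realizing this structure from the word data, read off its quiver, and produce a reddening sequence.

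For (a), I would follow the construction of Geiss--Leclerc--Schr\"oer: to each position $k$ of $\ol{w} = s_{i_1}\cdots s_{i_r}$ attach a module $T_k \in \cc_w$ built from the initial subword of length $k$, for instance via iterated application of the layer functors on $\mod\Lambda$, and set $T_{\ol w} = T_1 \oplus \cdots \oplus T_r$. The verification then breaks into three steps: (i) rigidity $\Ext^1_\Lambda(T_{\ol w},T_{\ol w})=0$, proved by induction on $r$ using the 2-Calabi--Yau property; (ii) the count $r=\ell(w)$ of indecomposable summands, which matches the number required for a cluster-tilting object of $\cc_w$ and so, together with rigidity in a stably 2-Calabi--Yau Frobenius category, forces $T_{\ol w}$ to be cluster-tilting; (iii) each $T_k$ lies in $\cc_w$ and the $T_k$ generate it via extensions. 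For the mutation-invariance claim, Matsumoto's theorem reduces us to the two braid moves: a commutation $s_is_j = s_js_i$ merely permutes indices, while the braid relation $s_is_js_i = s_js_is_j$ replaces precisely one indecomposable summand of $T_{\ol w}$ by its mutation.

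For (b), the quiver $Q_{\ol w}$ of $\End_\Lambda(T_{\ol w})$ is described combinatorially from $\ol{w}$: there are horizontal arrows $T_k \to T_{k^+}$, where $k^+$ is the smallest index greater than $k$ with $i_{k^+} = i_k$; and for every edge $i \sim j$ of $\Delta$, diagonal arrows between $T_k$'s of color $i$ and $T_l$'s of color $j$ according to the interleaving pattern of letters of colors $i$ and $j$ in $\ol{w}$. This comes out of computing morphism spaces between the $T_k$ from the defining short exact sequences and isolating irreducible maps. The frozen vertices correspond to the last occurrences of each color in $\ol w$, as these yield the projective-injective summands of $\cc_w$; this is the one piece of the description whose frozen subquiver requires separate bookkeeping.

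For (c), I would construct a reddening sequence by the recipe of \cite{Keller11c}: perform on the non-frozen part of $Q_{\ol w}$ the mutation sequence prescribed by reading a second reduced expression of $w$ in the appropriate order, and verify directly that the resulting composite mutation is a green-to-red transformation. Once such a sequence is available, the equality $\ca^+(Q_{\ol w})=\cu^+(Q_{\ol w})$ follows from the general principle, due to Gross--Hacking--Keel--Kontsevich and Muller, that ice quivers admitting a reddening sequence have coinciding cluster and upper cluster algebras with non-invertible coefficients. The main obstacle is step (iii) of part (a): proving that each $T_k$ actually lies in the specific torsion class $\cc_w$ and that the $T_k$ generate it under extensions, so that $T_{\ol w}$ is genuinely cluster-tilting in $\cc_w$ rather than merely rigid inside $\mod\Lambda$. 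Once this is secured, parts (b) and (c) are essentially combinatorial consequences of the word structure and of established machinery.
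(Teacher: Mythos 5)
The paper does not actually prove this statement: it is quoted with citations from Geiss--Leclerc--Schr\"oer and Buan--Iyama--Reiten--Scott, so your proposal has to be measured against the published proofs rather than against an argument in the text. Your sketches of parts a) and b) do follow the lines of those references: summands $T_k$ attached to the initial subwords of $\ol{w}$, rigidity by induction, the count $\ell(w)$ combined with the fact that in these stably $2$-Calabi--Yau Frobenius categories a basic rigid object with the maximal number of non-isomorphic indecomposable summands is cluster-tilting, Matsumoto's theorem reducing independence of the reduced expression to commutation moves (a permutation of the summands) and braid moves (a single mutation), and the combinatorial description of $Q_{\ol{w}}$ with frozen vertices at the last occurrence of each letter. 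These parts are fine as a summary of the known arguments, modulo the real work hidden in your step (iii).

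The genuine gap is in part c), and it is exactly the issue around which the whole paper is organized: the existence of a reddening sequence does \emph{not} imply $\ca^+(Q_{\ol{w}})=\cu^+(Q_{\ol{w}})$. What Gross--Hacking--Keel--Kontsevich prove is that a reddening sequence (with full rank exchange matrix) yields a theta basis of the \emph{upper} cluster algebra, i.e.\ the equality of the middle and the upper cluster algebras; it does not yield the equality of the ordinary and the upper cluster algebra, because theta functions need not lie in the ordinary cluster algebra. Muller's theorem concerns the preservation of reddening sequences under cluster reduction, not the equality $\ca=\cu$. If your implication were valid, the main open problem of this paper would evaporate: $Q(M_{v,\ol{w}})$ is a cluster reduction of $Q(T_{\ol{w}})$, hence has a reddening sequence by Muller's theorem, and one would immediately conclude $\ca=\cu$ for M\'enard's seed --- precisely the step the authors explain they cannot carry out. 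The equality $\ca^+(Q_{\ol{w}})=\cu^+(Q_{\ol{w}})$ in c) is a separate, substantial theorem of Geiss--Leclerc--Schr\"oer, proved by directly identifying both $\ca^+(Q_{\ol{w}})$ and $\cu^+(Q_{\ol{w}})$ with $\C[N(w)]$: one shows that $\C[N(w)]$ is generated by explicit generalized minors reached from the initial seed by explicit mutation sequences (giving $\C[N(w)]\subseteq\ca$), and one controls the upper bound by a separate argument using the dual semicanonical basis. You need to replace your last step by this argument, or by a reference to it; the reddening sequence only enters part c) as an additional, logically independent assertion.
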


As an example, consider $\Delta=A_4$ and 
$\ol{w}=s_1 s_2 s_3 s_1 s_2 s_4 s_3$. Then the quiver $Q_{\ol{w}}$ is given by
\[
\begin{tikzcd}
& \square \ar[rd] & & & \\
\square \ar[rd] \ar[ru] & & \bt \ar[rd] \ar[ll] & & \\
 & \square \ar[ru] \ar[rd] & & \bt \ar[rd] \ar[ll] & \\
 & & \square \ar[ru] & & \bt \ar[ll]
 \end{tikzcd}
 \]
 Let $w\in W$ and let $\ol{w}$ be a reduced expression for $w$. 
 Recall that we have defined
 $N(w)=N\cap w^{-1}N^-w$, where $N^-$ is the unipotent radical of $B^-$, and 
 $N'(w)=N\cap w^{-1}Nw$. We have a canonical isomorphism
 \[
 \C[N(w)] \iso \C[N]^{N'(w)}.
 \]
 
 \begin{theorem}[\cite{GeissLeclercSchroeer07d, GeissLeclercSchroeer11b}] Choose a
 decomposition into indecomposables
 \[
 T_{\ol{w}} = T_1 \oplus \cdots \oplus T_{l(w)}.
 \]
 Then the pair $(Q_{\ol{w}}, (\phi(T_i)))$ is an initial seed for a cluster structure
 on $\C[N(w)]$, i.e. the algebra morphism taking $x_i$ to $\phi(T_i)$ is
 an isomorphism. Moreover, it makes the following square commutative
 \[
 \begin{tikzcd}
 \ca^+(Q_{\ol{w}}) \ar[d] \ar[r,"\sim"] & \C[N(w)] \ar[d] \ar[r,"\sim"] & \C[N]^{N'(w)} \ar[dl] \\
 \ca(Q_{\ol{w}}) \ar[r,"\sim"] & \C[\cR_{e,w}] & 
 \end{tikzcd}
 \]
 \end{theorem}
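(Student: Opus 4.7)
The plan is to construct the top isomorphism via the cluster character $\phi$ evaluated on the summands of $T_{\ol{w}}$, and then deduce the bottom one by inverting the frozen variables. First I would define the algebra morphism $\ca^+(Q_{\ol{w}}) \to \C[N]$ on initial cluster variables by $x_i \mapsto \phi(T_i)$ and verify that its image lies in $\C[N]^{N'(w)} \iso \C[N(w)]$. The inclusion comes from the fact that $\phi$ sends objects of the torsion class $\cc_w$ to $N'(w)$-invariant functions, a consequence of the construction of $\phi$ via Lusztig's (dual) semicanonical basis together with Mizuno's identification of $\cc_w$ as the torsion class indexed by $w$.

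Next I would check that the exchange relations of $\ca^+(Q_{\ol{w}})$ are satisfied by the $\phi(T_i)$. Mutation of $T_{\ol{w}}$ at a non-projective summand $T_i$ inside the Frobenius category $\cc_w$, which carries a cluster structure by Theorem~\ref{thm:BIRS09}, yields two short exact sequences $0 \to T_i \to B \to T_i^* \to 0$ and $0 \to T_i^* \to B' \to T_i \to 0$; the Geiss--Leclerc--Schr\"oer multiplication formula for $\phi$ applied to them reproduces exactly the cluster algebra exchange relation between $\phi(T_i)$ and $\phi(T_i^*)$. Iterating mutations shows that $x_i \mapsto \phi(T_i)$ extends to a well-defined algebra homomorphism $\ca^+(Q_{\ol{w}}) \to \C[N(w)]$ sending each cluster variable to the image of the corresponding indecomposable under $\phi$. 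Surjectivity follows from earlier results of Geiss--Leclerc--Schr\"oer describing $\C[N(w)]$ as generated by the cluster variables coming from the mutation class of $T_{\ol{w}}$; injectivity follows from the fact that $\C[N(w)]$ is a polynomial domain of transcendence degree $l(w)$, matched by the algebraic independence of the $\phi(T_i)$ in the initial seed, visible from a $g$-vector or Jacobian argument.

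Finally, for the lower square I would invoke Leclerc's isomorphism $\C[N]^{e,w}[M_{e,w}^{-1}] \iso \C[\cR_{e,w}]$, observing that $N(e)$ is trivial and so $\C[N]^{e,w} = \C[N]^{N'(w)}$. Passing from $\ca^+(Q_{\ol{w}})$ to $\ca(Q_{\ol{w}})$ inverts the frozen cluster variables $\phi(T_j)$ associated to projective-injective summands $T_j$, so to conclude I would need to identify this multiplicative set with $M_{e,w}$. The main obstacle is precisely this identification: one must show that the irreducible factors of $D_{e,w} = \prod_{i\in I} \Delta_{\varpi_i, w^{-1}(\varpi_i)}$ coincide, up to scalars, with the values $\phi(T_j)$ at the projective-injective summands of $T_{\ol{w}}$. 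This delicate comparison between semicanonical basis elements and generalized minors is what is carried out in \cite{GeissLeclercSchroeer11b}, using the explicit description of the projective-injectives of $\cc_w$ and the transformation properties of generalized minors under the Chevalley embedding.
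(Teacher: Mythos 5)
This theorem is quoted in the paper as a known result of Gei\ss--Leclerc--Schr\"oer (\cite{GeissLeclercSchroeer07d, GeissLeclercSchroeer11b}); the paper itself supplies no proof, so there is nothing internal to compare your argument against. Your sketch does faithfully reconstruct the architecture of the proof in the cited references: the map $x_i\mapsto\phi(T_i)$ landing in $\C[N]^{N'(w)}$, the verification of the exchange relations via the GLS multiplication formula applied to the two exchange short exact sequences in the Frobenius category $\cc_w$, injectivity from the algebraic independence of the initial cluster variables, and the passage to $\C[\cR_{e,w}]$ by inverting the frozen variables and matching them with the factors of $D_{e,w}$ under Leclerc's localization isomorphism (your observation that $N(e)$ is trivial, so $\C[N]^{e,w}=\C[N]^{N'(w)}$, is the right reduction). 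One caveat: your surjectivity step --- ``follows from earlier results of Gei\ss--Leclerc--Schr\"oer describing $\C[N(w)]$ as generated by the cluster variables coming from the mutation class of $T_{\ol{w}}$'' --- is very nearly a restatement of the theorem being proved. In \cite{GeissLeclercSchroeer11b} this is the substantive point: one first shows that $\C[N(w)]$ is generated by an explicit family of unipotent minors attached to the reduced expression $\ol{w}$, and then exhibits each of these minors as a cluster variable reachable from the initial seed by an explicit mutation sequence. Similarly, the identification of the $\phi(T_j)$ at projective-injective summands with the generalized minors $\Delta_{\varpi_i, w^{-1}(\varpi_i)}$, which you correctly flag as the crux of the lower square, is a genuine computation in the references rather than a formal consequence. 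Since the paper defers the entire statement to the literature, deferring these two points is acceptable, but be aware that they carry essentially all of the weight.
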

 
 \subsection{The case of $\cR_{v,w}$} We follow \cite{Leclerc16}. Let $v\leq w$ be elements of $W$.
 Let $(\cc_v,\cc^v)$ be the torsion pair associated with $v$. Define
 \[
 \cc_{v,w} = \cc^v \cap \cc_w \subseteq \mod\Lambda.
 \]
 Clearly, this subcategory is extension-closed. By the results of Auslander--Smal\o\ 
 \cite{AuslanderSmaloe81}, it is also functorially finite in $\mod\Lambda$. 
 By Theorem~\ref{thm:BIRS09}, the category $\cc_{v,w}$ has a cluster structure.
 If $T$ is a cluster-tilting object of $\cc_w$, then its $\cc^v$-torsion-free part $T^{\cc^v}$
 is a cluster-tilting object of $\cc_{v,w}$, by Proposition~3.12 of \cite{Leclerc16}. 
 However, in general, the module $T^{\cc^v}$ is not basic. Let $T_{v, \ol{w}}$ be
 a maximal basic summand of the cluster-tilting object $T_{\ol{w}}^{\cc^v}$ 
 and let $Q_{v,\ol{w}}$ be the quiver of the endomorphism algebra of $T_{v,\ol{w}}$.
 
 \begin{theorem}[Leclerc \cite{Leclerc16}] \label{thm:lec16}
 \begin{itemize} 
 \item[a)] The $\C$-span of $\phi(\cc_{v,w})\subseteq \C[N]$ equals
 \[
 \C[N]^{v,w} = \mbox{}^{N(v)} \C[N]^{N'(w)}.
 \]
 \item[b)] The map $\phi: \cc_{v,w} \to \C[N]$ induces injective algebra morphisms
 \[
 \begin{tikzcd}
 \ca^+(Q_{v,\ol{w}}) \ar[d] \ar[r, hook, "\tilde{\phi}"] & \C[N]^{v,w} \ar[d] \\
 \ca(Q_{v,\ol{w}}) \ar[r,hook,"\tilde{\phi}_{loc}"'] & \C[\cR_{v,w}]
 \end{tikzcd}
 \]
 and $\dim \cR_{v,w}$ equals the number of vertices of $Q_{v,\ol{w}}$.
 \item[c)] The algebra embedding $\tilde{\phi}_{loc}$ is an isomorphism if $v\leq_R w$ or if
 $\cc_{v,w}$ has only finitely many indecomposables (up to isomorphism).
 \end{itemize}
 \end{theorem}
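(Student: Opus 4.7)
The plan is to leverage the Geiss--Leclerc--Schr\"oer cluster character $\phi\colon \mod\Lambda \to \C[N]$ together with the cluster structure on $\C[N(w)]$ recalled just before, and to obtain the statement for $\cR_{v,w}$ by restricting along the torsion pair $(\cc_v,\cc^v)$. First I would verify that $\cc_{v,w} = \cc^v \cap \cc_w$ is extension-closed and functorially finite in $\mod\Lambda$ (the latter by Auslander--Smal\o), so that Theorem~\ref{thm:BIRS09} endows it with a stably $2$-Calabi--Yau Frobenius structure and a cluster structure. Granting Leclerc's Proposition~3.12, $T_{\ol{w}}^{\cc^v}$ is a (generally non-basic) cluster-tilting object of $\cc_{v,w}$, and $T_{v,\ol{w}}$ is its maximal basic summand, with endomorphism quiver $Q_{v,\ol{w}}$.

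For part (a), I would show that under $\phi$ the subcategory $\cc^v$ corresponds to passing to $N(v)$-invariants: by the previous theorem, $\phi(\cc_w)$ $\C$-spans $\C[N]^{N'(w)}$, and the extra input is that the $N(v)$-action is encoded categorically by the torsion pair via multiplication by the generalized minors $\Delta_{\varpi_i,v^{-1}(\varpi_i)}$ (\cite{Leclerc16}, \S 3); concretely, a module $M \in \cc_w$ lies in $\cc^v$ exactly when $\phi(M)$ is $N(v)$-invariant. Combining these yields $\C\text{-span}\,\phi(\cc_{v,w}) = {}^{N(v)}(\C[N]^{N'(w)}) = \C[N]^{v,w}$.

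For part (b), I would invoke the Palu--Geiss--Leclerc--Schr\"oer cluster character formalism applied to the Frobenius category $\cc_{v,w}$. Writing $T_{v,\ol{w}} = T_1 \oplus \cdots \oplus T_n$ with the $T_i$ pairwise non-isomorphic indecomposables, the pair $(Q_{v,\ol{w}},(\phi(T_i)))$ is an initial seed because the multiplication formula for $\phi$ intertwines mutations of cluster-tilting objects in $\cc_{v,w}$ with mutations of seeds. This produces the algebra morphism $\tilde\phi\colon \ca^+(Q_{v,\ol{w}}) \to \C[N]^{v,w}$; injectivity follows from the linear independence of the images $\phi(M)$ of rigid modules established by GLS. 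Localizing at the frozen variables, which map to the irreducible factors of $D_{v,w}$, then yields $\tilde\phi_{loc}$. For the dimension statement one identifies $n$ with $l(w) - l(v) = \dim \cR_{v,w}$ by a direct count of the indecomposable summands of $T_{\ol{w}}^{\cc^v}$, starting from the $l(w)$ summands of $T_{\ol{w}}$ and controlling how the torsion-free quotient collapses those landing in $\cc_v$.

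For part (c), I would need to prove surjectivity of $\tilde\phi_{loc}$. If $v \leq_R w$ with $\ol{w} = \ol{v}\cdot\ol{x}$, then the reduced expression of $v$ appears as a prefix, so $T_{\ol{w}}$ decomposes compatibly with $(\cc_v,\cc^v)$ and $T_{v,\ol{w}}$ can be identified with the cluster-tilting object attached to the suffix $\ol{x}$ inside $\cc_{v,w}$; together with the presentation of $\C[\cR_{v,w}]$ as the localization of $\C[N]^{v,w}$ at the minors $\Delta_{v^{-1}\varpi_i,w^{-1}\varpi_i}$, this produces the required generators in the image. If $\cc_{v,w}$ has only finitely many indecomposables, then $\ca(Q_{v,\ol{w}})$ is of finite type, every cluster monomial is of the form $\phi(M)$ for $M \in \cc_{v,w}$, and part (a) together with the localization shows these span $\C[\cR_{v,w}]$. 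The main obstacle, and precisely the reason (c) is stated only in these two cases, is establishing surjectivity of $\tilde\phi_{loc}$ in general: one must show that the cluster monomials (or at least all $\phi(M)$ for $M \in \cc_{v,w}$) generate $\C[\cR_{v,w}]$, which is essentially Leclerc's conjecture and the source of the subsequent work on the \emph{upper} cluster algebra version.
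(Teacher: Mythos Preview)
The paper does not contain a proof of this theorem: it is stated with attribution to \cite{Leclerc16} and used as input for the results in later sections. There is therefore no ``paper's own proof'' to compare against; what you have written is a plausible high-level sketch of how Leclerc's original argument proceeds, not a reconstruction of anything in the present paper.

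That said, a few points in your sketch are too loose to stand as a proof. In part~(a), the claim that ``$M\in\cc_w$ lies in $\cc^v$ exactly when $\phi(M)$ is $N(v)$-invariant'' is not literally how Leclerc argues, and indeed is not obviously true as a biconditional on individual modules; what is shown in \cite{Leclerc16} is an equality of \emph{spans}, obtained by relating the dual semicanonical basis elements supported on $\cc_{v,w}$ to a basis of the double-invariant space. In part~(b), ``injectivity follows from the linear independence of the images $\phi(M)$ of rigid modules'' is again imprecise: one needs that the cluster monomials themselves are linearly independent, which in Leclerc's setting comes from the fact that they belong to the dual semicanonical basis. Finally, your dimension count for $n=l(w)-l(v)$ (``controlling how the torsion-free quotient collapses those landing in $\cc_v$'') hides the nontrivial step of showing that exactly $l(v)$ of the torsion-free quotients coincide; Leclerc handles this via a Jordan--H\"older type argument for the filtration of $T_{\ol w}$ induced by $\ol w$. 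Your outline for part~(c) is closer to the actual argument, though in the $v\leq_R w$ case the identification reduces to the already-established $\cR_{e,w'}$ situation rather than to a direct construction of generators.
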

 
 \begin{conjecture}[Leclerc \cite{Leclerc16}] The algebra embedding $\tilde{\phi}_{loc}$ is
 always an isomorphism.
 \end{conjecture}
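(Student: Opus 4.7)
The plan is to prove the two equalities in the factorization
\[
\ca(Q_{v,\ol{w}}) \hookrightarrow \cu(Q_{v,\ol{w}}) \hookrightarrow \C[\cR_{v,w}],
\]
where the leftmost inclusion is automatic and the rightmost one comes from Theorem~\ref{thm:lec16}. The equality on the right is the upper-cluster-algebra variant of the conjecture (the contribution of the present paper); the identification $\ca(Q_{v,\ol{w}}) = \cu(Q_{v,\ol{w}})$ on the left is the genuinely open point.

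First I would lift $\tilde\phi_{loc}$ from Theorem~\ref{thm:lec16} to a map targeting the upper cluster algebra. For each mutable index $k$, the mutation $\mu_k(T_{v,\ol{w}})$ inside the Frobenius category $\cc_{v,w}$, combined with the Geiss--Leclerc--Schr\"oer cluster character $\phi$, produces an element $\phi(T_k^*)\in\C[N]^{v,w}$ satisfying the exchange relation prescribed by $Q_{v,\ol{w}}$ at $k$. After inverting $D_{v,w}$, every adjacent cluster then consists of regular functions on $\cR_{v,w}$, so $\tilde\phi_{loc}$ extends to $\cu(Q_{v,\ol{w}}) \to \C[\cR_{v,w}]$. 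To see this extension is surjective I would invoke a Starr--Muller style criterion: $\C[\cR_{v,w}]$ is a finitely generated normal domain of Krull dimension equal to the number of vertices of $Q_{v,\ol{w}}$, so it suffices to show that the Laurent loci of two well-chosen adjacent clusters cover $\Spec\C[\cR_{v,w}]$ off a subset of codimension at least two. Normality then forces equality.

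The substantive step is the identification $\ca(Q_{v,\ol{w}}) = \cu(Q_{v,\ol{w}})$. The cleanest route is to exhibit a reddening sequence for $Q_{v,\ol{w}}$ and then invoke Gross--Hacking--Keel--Kontsevich (or Muller's local acyclicity) to conclude. For $v = e$ such a sequence for $Q_{\ol{w}}$ is furnished by the theorem of Geiss--Leclerc--Schr\"oer and Buan--Iyama--Reiten--Scott quoted above. In general, $Q_{v,\ol{w}}$ is obtained from $Q_{\ol{w}}$ by replacing each indecomposable summand $T_i$ of $T_{\ol{w}}$ by its $\cc^v$-torsion-free part $T_i^{\cc^v}$, then passing to a maximal basic summand of $T_{\ol{w}}^{\cc^v}$ and refreezing the $\cc_{v,w}$-projective-injectives; none of these operations obviously preserves the existence of a reddening sequence.

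The main obstacle is precisely this last step. The reddening sequence for $Q_{\ol{w}}$ does not in general descend to one on $Q_{v,\ol{w}}$, and a direct combinatorial construction appears out of reach outside the two cases already treated in Theorem~\ref{thm:lec16}~(c). A more intrinsic attempt would work inside the $2$-Calabi--Yau Frobenius category $\cc_{v,w}$ itself and build a maximal green sequence from successive mutations of cluster-tilting objects, exploiting Mizuno's parametrization of torsion classes and its compatibility with the torsion pair $(\cc_v,\cc^v)$. Proving that such a sequence terminates with the shifted initial cluster-tilting object---the categorical shadow of reddening---appears to require new categorical input, and this is where one should expect the difficulty to concentrate.
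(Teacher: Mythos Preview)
The statement is a \emph{conjecture}; the paper does not prove it and contains no proof to compare against. What the paper does establish is the upper-cluster analogue $\cu(Q(T))\iso\C[\cR_{v,w}]$, but only for M\'enard's cluster-tilting object $T=M_{v,\ol{w}}$, and it remarks explicitly that it is unknown whether even this holds for Leclerc's $T_{v,\ol{w}}$ (which is the object the conjecture concerns). Your factorisation $\ca\hookrightarrow\cu\hookrightarrow\C[\cR_{v,w}]$ is the right shape, but both halves are more delicate than your sketch suggests.

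For the right-hand equality, your codimension-two/normality route is quite different from what the paper actually does. The paper's argument is entirely categorical: compatibility of the Geiss--Leclerc--Schr\"oer cluster character $\phi$ with the Caldero--Chapoton map $CC$, Plamondon's theorem that $CC$ lands in $\cu^+$, and Qin's generic-basis theorem, which guarantees that the image of $CC$ generates $\cu$ over the coefficient ring. Qin's theorem requires a reddening sequence; this is available for M\'enard's seed because $Q(M_{v,\ol{w}})$ is a cluster reduction of $Q_{\ol{w}}$ and Muller shows reddening survives cluster reduction. For Leclerc's seed no such reddening sequence is known (indeed, it is not even known whether $T_{v,\ol{w}}$ and $M_{v,\ol{w}}$ are mutation equivalent). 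Your geometric approach would sidestep Qin entirely, but you have not indicated how to verify the codimension-two condition, which is the entire content of such an argument.

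Your treatment of the left-hand equality $\ca=\cu$ contains a genuine error. You assert that once a reddening sequence exists one can ``invoke Gross--Hacking--Keel--Kontsevich (or Muller's local acyclicity) to conclude''. Neither implication is known: GHKK under reddening produces a theta basis for $\cu$, not the equality $\ca=\cu$, and Muller's criterion is local acyclicity, which is not implied by reddening. In fact the paper already knows that $Q(M_{v,\ol{w}})$ admits a reddening sequence and still treats $\ca=\cu$ as open; Section~6 analyses instead whether the equality $\ca=\cu$, known for $Q_{\ol{w}}$, survives the specific freezing and deletion steps in M\'enard's construction. So the obstacle you locate---constructing a reddening sequence---is, for M\'enard's quiver, already overcome; the true obstruction lies elsewhere and your proposal does not address it.
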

 
 One difficulty arises from the fact that Leclerc's seed $(Q_{v,\ol{w}}, (\phi(T_i)))$
 is not known explicitly. The following theorem is the first to have made it explicit in certain cases.
 
 \begin{theorem}[Serhiyenko--Sherman-Bennett--Williams \cite{SerhiyenkoShermanBennettWilliams20}] 
 Leclerc's seed equals the canonical seed defined by a plabic graph if
 $\cR_{v,w}$ is an open Schubert variety (in type $A$).
 \end{theorem}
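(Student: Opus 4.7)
The plan is to match the two seeds explicitly by establishing a combinatorial dictionary between Leclerc's representation-theoretic data and the plabic-graph data. Type $A$ is essential because the indecomposable $\Lambda$-modules appearing in $\cc_w$ admit a transparent parametrization by intervals (``segments''), which one can hope to align with the $k$-subsets labeling the faces of a reduced plabic graph.

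First, I would fix a reduced expression $\ol{w}$ of $w$ adapted to the open Schubert situation---e.g.\ one of the form $\ol{w}=\ol{v}\,\ol{x}$, which is available because the open Schubert case corresponds essentially to $v \leq_R w$---and use the construction of $T_{\ol{w}}$ from \cite{GeissLeclercSchroeer07d} to write down its indecomposable summands, and those of its $\cc^v$-torsion-free part $T_{\ol{w}}^{\cc^v}$, as explicit $\Lambda$-modules. On the plabic side, I would attach to the same reduced word a reduced plabic graph $G_{\ol{w}}$ via the standard passage from wiring diagrams to plabic graphs, and read off its face labels.

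Second, I would compare the two seeds in two steps. For the quivers: the arrows of $Q_{v,\ol{w}}$ are controlled by dimensions of $\Ext^1$ between the summands $T_i$, which in type $A$ reduce to a simple combinatorics of intervals; these should match the arrows of the plabic quiver read off at trivalent vertices of $G_{\ol{w}}$. For the cluster variables: I would show that $\phi(T_i)$ equals the flag minor corresponding to the face label of $G_{\ol{w}}$ attached to $T_i$. Both sides are known to belong to the dual semicanonical basis, so it suffices to check the equality on a single test element via \cite{GeissLeclercSchroeer06}.

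The main obstacle is the identification $\phi(T_i) = \Delta_{u_i,v_i}$. Directly computing Euler characteristics of the relevant quiver Grassmannians is hopeless in general, so I would proceed by induction, starting from the case $v=e$, $w=w_0$, where the identification with flag minors is part of \cite{GeissLeclercSchroeer06}, and then propagate through mutations: one shows that mutating Leclerc's seed at the summand corresponding to a given face of $G_{\ol{w}}$ produces precisely the Pl\"ucker coordinate attached to the face after a square move of the plabic graph. Since in the open Schubert case $\tilde{\phi}_{loc}$ is already an isomorphism by Theorem~\ref{thm:lec16}(c), it is enough to match a single seed; this reduces the whole comparison to a local check at each mutation, which is tractable thanks to the well-behaved interaction between square moves of plabic graphs and braid moves of reduced expressions in type $A$.
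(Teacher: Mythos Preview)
The paper does not contain a proof of this theorem: it is stated as a result of Serhiyenko--Sherman-Bennett--Williams and simply cited from \cite{SerhiyenkoShermanBennettWilliams20}. There is therefore no ``paper's own proof'' against which to compare your proposal.

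That said, your outline contains a concrete error worth flagging. You write that ``the arrows of $Q_{v,\ol{w}}$ are controlled by dimensions of $\Ext^1$ between the summands $T_i$''. This cannot be right: $T$ is a cluster-tilting object, so by definition $\Ext^1_\Lambda(T_i,T_j)=0$ for all $i,j$. The ice quiver $Q(T)$ is the Gabriel quiver of $\End_\Lambda(T)$; its arrows are given by a basis of $\rad(T_i,T_j)/\rad^2(T_i,T_j)$, i.e.\ by irreducible morphisms, not by $\Ext^1$-classes. Beyond this, the remainder of your sketch (aligning interval modules with face labels, matching $\phi(T_i)$ with flag minors, and propagating via square moves versus mutations) is a reasonable high-level strategy and is indeed in the spirit of what \cite{SerhiyenkoShermanBennettWilliams20} carries out, but the ``local check at each mutation'' you allude to hides substantial combinatorial work which your proposal does not address.
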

 
 This theorem implies Leclerc's conjecture for these cases since we have $v\leq_R w$
 if $\cR_{v,w}$ is an open Schubert variety.
 
 \begin{theorem}[Galashin--Lam \cite{GalashinLam19}] 
 Leclerc's seed seed equals the canonical seed defined
 by a plabic graph if $\cR_{v,w}$ is an open positroid variety (i.e. a type $A$ open
 Richardson variety in the Grassmannian). Moreover, the conjecture holds in
 this case.
 \end{theorem}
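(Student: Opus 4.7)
The theorem has two parts: the seed identification and the upgrade of $\tilde{\phi}_{loc}$ to an isomorphism. The plan is to combine the Knutson--Lam--Speyer isomorphism between open positroid varieties and open Richardson varieties with the open Schubert case already handled by Serhiyenko--Sherman-Bennett--Williams, and then to exploit the connectedness of the reduced plabic graph class under square moves.

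Concretely, every open positroid variety $\Pi^\circ\subseteq Gr(k,n)$ is isomorphic, as an affine variety, to an open Richardson variety $\cR_{v,w}$ in $Sl_n/B^-$, with the bounded affine permutation of $\Pi^\circ$ determining the pair $(v,w)$. Under this isomorphism, the Pl\"ucker coordinates labelling faces of a reduced plabic graph $G$ for $\Pi^\circ$ correspond to products of generalized minors on $\cR_{v,w}$. I would choose a reduced expression $\ol{w}$ compatible with the wiring diagram of $G$, with a distinguished subexpression for $v$, and compare the indecomposable summands of Leclerc's module $T_{v,\ol{w}}$ with the faces of $G$ via the cluster character $\phi$: both sides have cardinality $\dim \cR_{v,w}$ by part (b) of Theorem~\ref{thm:lec16}, and one shows that the Pl\"ucker minor at each face is precisely $\phi(T_i)$ for the corresponding summand. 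Arrows of $Q_{v,\ol{w}}$ should then be matched to the face-adjacency combinatorics of $G$, using that square moves on $G$ correspond to cluster mutations and that the mutation class of Leclerc's seed does not depend on the choice of $\ol{w}$.

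Given the seed identification, the second claim follows by invoking Scott's theorem, extended to open positroid varieties, which presents $\C[\Pi^\circ]$ as the cluster algebra of the plabic graph seed with frozens inverted. Transporting through the Knutson--Lam--Speyer isomorphism and applying part (b) of Theorem~\ref{thm:lec16} then upgrades the injection $\tilde{\phi}_{loc}$ to a surjection, hence an isomorphism, proving Leclerc's conjecture in this case.

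The main obstacle is the middle step: making Leclerc's abstractly defined seed $(Q_{v,\ol{w}}, (\phi(T_i)))$ explicit enough to match against the concrete plabic quiver. Leclerc's construction picks a maximal basic summand of $T_{\ol{w}}^{\cc^v}$, whose endomorphism quiver is not given by a combinatorial recipe, and comparison will require a fine dictionary between the representation theory of $\Lambda$ restricted to $\cc_{v,w}$ and the combinatorics of plabic graphs, presumably mediated by a Baur--King--Marsh style realization of the Grassmannian cluster category. Pinning down the frozen subquiver, which Theorem~\ref{thm:lec16} does not specify, is an additional subtlety the argument must settle.
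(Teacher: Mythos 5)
This statement is quoted from Galashin--Lam \cite{GalashinLam19} and the paper gives no proof of it, so there is no internal argument to measure your proposal against; I can only judge the sketch on its own merits. The first half (identifying Leclerc's seed with the plabic seed) names the right ingredients --- the identification of open positroid varieties with type $A$ open Richardson varieties, the Schubert case of \cite{SerhiyenkoShermanBennettWilliams20}, square moves versus mutation --- but, as you concede yourself, the decisive step of matching the abstractly defined summands of $T_{v,\ol{w}}$ and their endomorphism quiver with faces of a plabic graph is exactly the hard content, and your outline does not supply it.

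The more serious problem is in the second half, which is circular. Scott's theorem presents the (homogeneous) coordinate ring of the Grassmannian as a cluster algebra; its ``extension to open positroid varieties,'' which you invoke to present $\C[\Pi^\circ]$ as the cluster algebra of the plabic seed with frozens inverted, is precisely the statement being proved: given the seed identification, it is equivalent to the surjectivity of $\tilde{\phi}_{loc}$, i.e.\ to Leclerc's conjecture in this case. You cannot cite it as an input. Note also that the easy escape route via part (c) of Theorem~\ref{thm:lec16} is unavailable, since for positroid varieties one may have $v\not\leq_R w$, as the paper points out immediately after the statement. In Galashin--Lam's actual argument the surjectivity is earned by different means: local acyclicity of positroid cluster algebras and the twist automorphism of Muller--Speyer relating the source- and target-labelled plabic seeds, combined with an upper-bound (starfish-type) inclusion $\C[\Pi^\circ]\subseteq\cu$. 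Some such independent mechanism for the reverse inclusion must replace your appeal to ``extended Scott.''
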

 
 Notice that in the situation of the theorem, we may have $v \not\leq_R w$. 
 
 \section{M\'enard's results}
 
 We keep the notations and assumptions of the preceding section.
 
 \begin{theorem}[M\'enard \cite{Menard22}] There is an explicit sequence of mutations
 transforming $T_{\ol{w}}$ into a cluster-tilting object $T'_{\ol{w}}$ such that any maximal
 direct summand $M_{v,\ol{w}}$ of $T'_{\ol{w}}$ lying in $\cc_{v,w}$ is a cluster-tilting
 object of $\cc_{v,w}$.
 \end{theorem}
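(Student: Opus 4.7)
The plan is to build the mutation sequence by analyzing how the torsion pair $(\cc_v,\cc^v)$ decomposes the indecomposable summands of $T_{\ol{w}}$. Write $T_{\ol{w}} = T_1\oplus\cdots\oplus T_m$ and, for each $k$, consider the canonical short exact sequence
\[
0 \to (T_k)_{\cc_v} \to T_k \to T_k^{\cc^v} \to 0.
\]
Each indecomposable $T_k$ falls into one of three classes: it lies entirely in $\cc_v$, it lies entirely in $\cc^v$, or it is \emph{mixed} (both torsion and torsion-free parts nonzero). By Proposition~3.12 of \cite{Leclerc16}, the object $T_{\ol{w}}^{\cc^v}$ is already cluster-tilting in $\cc_{v,w}$, but not necessarily basic; the failure of basicness occurs precisely when two mixed summands $T_j$, $T_k$ have isomorphic torsion-free parts, or when the torsion-free part of some mixed summand coincides with an already-torsion-free summand. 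The mutation sequence has to eliminate these collisions, vertex by vertex, while staying inside $\cc_w$.

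First I would identify, directly from the reduced word $\ol{w}$ and from a chosen reduced expression of $v$, the vertices of $Q_{\ol{w}}$ at which these collisions occur. Mizuno's parametrization $w\mapsto \cc_w$ together with the Geiss--Leclerc--Schr\"oer combinatorics of $T_{\ol{w}}$ should deliver a purely combinatorial criterion for mixedness and for coincidence of torsion-free parts. At each such vertex I would prescribe a mutation whose effect is to replace the mixed summand by a new indecomposable lying either purely in $\cc_v$ or purely in $\cc^v$, without disturbing the summands that are already pure. I expect the natural ordering of the mutations to be dictated by the reduced expression (read from one end to the other), so that the process terminates after finitely many steps in a well-defined cluster-tilting object $T'_{\ol{w}}$ of $\cc_w$.

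The final step is to verify that the maximal direct summand $M_{v,\ol{w}}$ of $T'_{\ol{w}}$ lying in $\cc_{v,w}$ is cluster-tilting there. For this I would invoke Theorem~\ref{thm:BIRS09} and check three things: (a) $M_{v,\ol{w}}$ is rigid in $\cc_{v,w}$, which follows from the rigidity of $T'_{\ol{w}}$ in $\cc_w$ combined with the fact that its summands lie in $\cc^v$; (b) $M_{v,\ol{w}}$ contains the $\Ext$-projective-injectives of $\cc_{v,w}$, which are explicit in terms of $v$ and $w$; and (c) the number of pairwise non-isomorphic indecomposable summands of $M_{v,\ol{w}}$ equals $\dim \cR_{v,w}$, matching the count from Theorem~\ref{thm:lec16}(b). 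The main obstacle will be the combinatorial bookkeeping in step (a) and in the construction of the recipe: one must prove that the prescribed mutation sequence genuinely removes every mixed summand without ever reintroducing a collision or breaking rigidity along the way. This requires a detailed analysis of the exchange short exact sequences in $\cc_w$ relative to the torsion pair $(\cc_v,\cc^v)$, and it is here that the argument is likely to be most delicate.
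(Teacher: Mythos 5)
First, a point of reference: the paper does not prove this statement. It is M\'enard's theorem, quoted from \cite{Menard22} (the mutation sequence itself was conjectured by Schr\"oer), so there is no in-paper proof to compare yours against. Judged on its own terms, your text is a research plan rather than a proof, and the gaps sit exactly where the mathematical content lies.

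Concretely: (1) In the stably $2$-Calabi--Yau Frobenius category $\cc_w$, mutation at a non-projective indecomposable summand replaces it by the \emph{unique} other complement, determined by the exchange sequences; you cannot ``prescribe a mutation whose effect is to replace the mixed summand by a new indecomposable lying purely in $\cc_v$ or $\cc^v$''. That the mutated summand becomes pure, and that purity of the remaining summands is not destroyed along the way, is precisely what has to be proved, and your proposal only asserts it. M\'enard's actual sequence is organized by a reduced expression of $v$, one block of mutations per simple reflection, passing through the torsion pairs attached to the successive prefixes of $\ol{v}$ --- not by a one-pass elimination of ``collisions''. (2) You conflate two different operations: Leclerc's construction takes the torsion-free \emph{parts} $T_k^{\cc^v}$ (quotients of the $T_k$), whereas the theorem concerns the maximal direct \emph{summand} of $T'_{\ol{w}}$ lying in $\cc_{v,w}$. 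The failure of basicness of $T_{\ol{w}}^{\cc^v}$ is a symptom, not the problem the mutation sequence solves; the real content is that after mutating, sufficiently many indecomposable summands of $T'_{\ol{w}}$ genuinely \emph{belong to} $\cc^v\cap\cc_w$, namely $l(w)-l(v)$ pairwise non-isomorphic ones. Nothing in your plan establishes this count, and without it your step (c) --- which is what supplies the maximality half of ``cluster-tilting'' --- has no input. By contrast, your step (a) is the easy half: any direct summand of the rigid object $T'_{\ol{w}}$ is automatically rigid in the extension-closed subcategory $\cc_{v,w}$. So the proposal correctly identifies where the difficulty is, but does not resolve it.
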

 
 The sequence of mutations in the theorem was conjectured by Jan Schr\"oer. 
 The cluster-tilting object $M_{v,\ol{w}}$ is expected to be isomorphic to $T_{v,\ol{w}}$.
 It yields a (possibly new) candidate seed for $\C[N]^{v,w}$ and $\C[\cR_{v,w}]$.
 In his thesis \cite{Menard21, Menard22}, M\'enard has developed an algorithm
 allowing to explicitly compute the seed associated with $M_{v,\ol{w}}$. 
 It follows from his theorem above that the quiver $Q(M_{v,\ol{w}})$ is a
 {\em cluster reduction} of $Q(T_{\ol{w}})$, i.e. it is obtained from $Q(T_{\ol{w}})$
 by mutating, freezing vertices and deleting certain frozen vertices (in this order). 
 By a theorem of Muller \cite{Muller16}, the existence of reddening sequences
 is preserved under cluster reduction. Since the existence of a reddening sequence
 for the ice quiver $Q(T_{\ol{w}})$ is known \cite{GeissLeclercSchroeer11b}, it follows that the ice quiver
 $Q(M_{v,\ol{w}})$ has a reddening sequence. Moreover, the exchange matrix
 associated with $Q(M_{v,\ol{w}})$ has full rank (this follows 
 essentially from \cite{BuanIyamaReitenScott09}). Thus, the
 upper cluster algebra $\cu(Q(M_{v,\ol{w}}))$ (with invertible coefficients) admits a theta
 basis in the sense of Gross--Hacking--Keel--Kontsevich 
 \cite{GrossHackingKeelKontsevich18} and also a generic basis,
 as shown by Qin \cite{Qin19}. In particular, the image of the
 Caldero--Chapoton map spans the upper cluster algebra
 $\cu(Q(M_{v,\ol{w}}))$ over the algebra of Laurent polynomials
 in the coefficients.

 \section{Upper cluster structure}
 
 We keep the notations and assumptions of the preceding section. Let
 $T$ be M\'enard's cluster-tilting object $M_{v,\ol{w}}$.
 Let $\phi: \ca(Q(T)) \to \C[N]^{v,w}$ be the algebra morphism associated
 with the seed $(Q(T), (\phi(T_i)))$.

 \begin{theorem} The map $\phi$ yields a commutative square
 \[
 \begin{tikzcd}
 \cu^+(Q(T)) \ar[d] \ar[r, "\tilde{\phi}"'] & \C[N]^{v,w} \ar[d]\\
 \cu(Q(T)) \ar[r,"\sim","\tilde{\phi}_{loc}"'] & \C[\cR_{v,w}].
 \end{tikzcd}
 \]
 where the bottom map $\tilde{\phi}_{loc}$ is an isomorphism.
 \end{theorem}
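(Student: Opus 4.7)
The plan is to combine M\'enard's compatibility of $T := M_{v,\ol{w}}$ with the cluster structure on $\cc_{v,w}$, Leclerc's injectivity statement (Theorem~\ref{thm:lec16}) at the level of the ordinary cluster algebra, and the spanning property of the Caldero--Chapoton map in the upper cluster algebra recalled at the end of Section~4.

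First I would observe that $T$ and Leclerc's $T_{v,\ol{w}}$ are both basic cluster-tilting objects of $\cc_{v,w}$; by the cluster structure on $\cc_{v,w}$ (Theorem~\ref{thm:BIRS09}) they are linked by a sequence of mutations, and their ice quivers are related by mutations at non-frozen vertices, with the frozen vertices corresponding on each side to the projective-injective indecomposable summands. Since $\phi$ is a cluster character, the exchange relations along this mutation path give canonical isomorphisms $\ca(Q_{v,\ol{w}})\iso\ca(Q(T))$ and $\cu(Q_{v,\ol{w}})\iso\cu(Q(T))$ compatible with $\phi$. Applying Theorem~\ref{thm:lec16} I then obtain an injective map $\ca(Q(T))\hookrightarrow\C[\cR_{v,w}]$ sending $x_i$ to $\phi(T_i)$, and the initial cluster has transcendence degree $\dim\cR_{v,w}$; in particular the fraction fields of $\ca(Q(T))$ and $\C[\cR_{v,w}]$ become canonically identified.

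I would then extend this to $\tilde{\phi}_{loc}\colon\cu(Q(T))\to\C[\cR_{v,w}]$. Via the identification of fraction fields, every element of $\cu(Q(T))$ is a rational function on $\cR_{v,w}$; to check that it is regular, I use that the Caldero--Chapoton images $\phi(X)$, $X\in\cc_{v,w}$, span $\cu(Q(T))$ as a module over the Laurent ring of frozen variables. Each such $\phi(X)$ lies in $\C[N]^{v,w}\subseteq\C[\cR_{v,w}]$ by Theorem~\ref{thm:lec16}(a), while the frozen cluster variables, being $\phi$-images of the projective-injective summands of $T$, coincide up to scalars with the irreducible factors of $D_{v,w}=\prod_i\Delta_{v^{-1}(\varpi_i),w^{-1}(\varpi_i)}$ and are therefore units in $\C[\cR_{v,w}]$. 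Every element of $\cu(Q(T))$ thus descends to a regular function, yielding $\tilde{\phi}_{loc}$ and, by a parallel argument using the polynomial (rather than Laurent) ring in the frozens, also $\tilde{\phi}$; the square commutes by construction.

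Surjectivity of $\tilde{\phi}_{loc}$ then follows from the presentation $\C[\cR_{v,w}]=\C[N]^{v,w}[M_{v,w}^{-1}]$ together with Theorem~\ref{thm:lec16}(a) (which gives $\C[N]^{v,w}$ as the $\C$-span of $\phi(\cc_{v,w})$) and the fact that $M_{v,w}^{-1}$ is generated by inverses of frozen cluster variables. Injectivity is automatic since both rings are integral domains sharing the same fraction field and the map is non-zero on $\ca(Q(T))$. The hard part will be the explicit identification of the frozen cluster variables with scalar multiples of the irreducible factors of $D_{v,w}$, on which both the existence of $\tilde{\phi}_{loc}$ and its surjectivity rely. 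Although implicit in Leclerc's and M\'enard's set-ups, this requires verifying that the projective-injective indecomposable summands of $M_{v,\ol{w}}$ in $\cc_{v,w}$ are sent by $\phi$ precisely to the correct unipotent minors and that no scalar or multiplicity issue obstructs the comparison $\C[N]^{v,w}[\mathrm{frozens}^{-1}]=\C[\cR_{v,w}]$.
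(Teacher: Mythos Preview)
The main gap is in your first paragraph. The claim that $M_{v,\ol{w}}$ and Leclerc's $T_{v,\ol{w}}$ are linked by a sequence of mutations is \emph{not} a consequence of Theorem~\ref{thm:BIRS09}: that theorem asserts that $\cc_{v,w}$ carries a cluster structure (so that basic cluster-tilting objects can be mutated), but it says nothing about connectedness of the mutation graph. The paper itself notes just before the theorem that $M_{v,\ol{w}}$ is only \emph{expected} to be isomorphic to $T_{v,\ol{w}}$, and in the Remark immediately following the theorem it says explicitly that it is not known whether the statement holds for $T_{v,\ol{w}}$. If your mutation-equivalence claim were valid, then, since $\ca$, $\cu$ and the existence of a reddening sequence are all mutation-invariant, the result would transfer to $T_{v,\ol{w}}$ at once, contradicting that remark. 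So this reduction step cannot be used.

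The paper avoids this entirely by working directly with $T=M_{v,\ol{w}}$. The field embedding $\C(x_i)\hookrightarrow\C(N)$ and hence injectivity come simply from the algebraic independence of the $\phi(T_i)$ (no appeal to Theorem~\ref{thm:lec16}(b) is needed), and the compatibility $\tilde{\phi}\circ CC=\phi$ between the abstract Caldero--Chapoton map on $\cc_{v,w}$ and Geiss--Leclerc--Schr\"oer's $\phi$ is quoted from \cite{GeissLeclercSchroeer12}. From that point on the argument is essentially what you outline in your second and third paragraphs: Plamondon's theorem places $CC(\cc_{v,w})$ inside $\cu^+$; Theorem~\ref{thm:lec16}(a) then gives $\C[N]^{v,w}\subseteq\tilde{\phi}(\cu^+)$; full rank together with the starfish lemma yields $\cu$ as the localization of $\cu^+$ at the frozens, whence $\C[\cR_{v,w}]\subseteq\tilde{\phi}(\cu)$; and the reverse inclusion follows from Qin's generic basis. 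The decisive input for this last step is the reddening sequence for $Q(M_{v,\ol{w}})$, which comes from M\'enard's description of $Q(M_{v,\ol{w}})$ as a cluster reduction of $Q(T_{\ol{w}})$ --- this is exactly what singles out M\'enard's object and is not available for Leclerc's $T_{v,\ol{w}}$.
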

 
\begin{remark} We do not know whether the statement of the
theorem also holds if $T$ is Leclerc's cluster tilting object $T_{v,\ol{w}}$.
\end{remark}
 
\begin{proof}
Since the $\phi(T_i)$ are algebraically
independent, the map $x_i \mapsto \phi(T_i)$ defines a
field embedding
\[
\begin{tikzcd}
\C(x_i) \ar[rr,hook, "\tilde{\phi}"] && \C(N).
\end{tikzcd}
\]
 Let $CC: \cc_{v,w} \to \C(x_i)$ denote the cluster
character associated with the cluster-tilting object $T\in \cc_{v,w}$
in \cite{FuKeller10}.
By Theorem~4 of \cite{GeissLeclercSchroeer12}, the triangle
\[
\begin{tikzcd} 
\C(x_i) \ar[rr,hook, , "\tilde{\phi}"] && \C(N) \\
 & \cc_{v,w} \ar[ul, "CC"] \ar[ur, "\phi"'] &  
 \end{tikzcd}
 \]
 commutes. Now by definition, the map $CC: \cc_{v,w} \to \C(x_i)$
 actually takes its values in $\C[x_i^\pm]$ and by Theorem~1.1 of
 \cite{Plamondon13}, it even takes its values in the upper cluster
 algebra $\cu^+ = \cu^+(Q(T))$ with non invertible coefficients.
 Clearly, the field embedding 
 \[
 \tilde{\phi}: \C(x_i) \to \C(N)
 \]
 induces an isomorphism 
 \[
 \cu^+ \iso \tilde{\phi}(\cu^+) \subset \C(N)
 \]
 and we have the commutative square
 \[
 \begin{tikzcd}
 \cc_{v,w} \ar[d,"CC"'] \ar[r,"\phi"] & \C(N) \\
 \cu^+ \ar[r,"\sim"] & \tilde{\phi}(\cu^+). \ar[u,hook]
 \end{tikzcd}
 \]
 By Theorem \ref{thm:lec16} (a), the $\C$-span of $\phi(\cc_{v,w})$
 equals $\C[N]^{v,w}$. This implies the inclusion
 \[
 \C[N]^{v,w} \subseteq \tilde{\phi}(\cu^+).
 \]
 Since $Q(T)$
 is of full rank, the upper cluster algebra with non invertible
 coefficients is a {\em finite} intersection of Laurent
 polynomial rings (by Cor.~1.9 of \cite{BerensteinFominZelevinsky05},
 the `starfish lemma'). Therefore, the
 upper cluster algebra with invertible coefficients $\cu$
 is the localization of $\cu^+$ at the coefficients. 
 Therefore, from the above inclusion, we deduce that we have
 \[
 \C[N]^{v,w}[M_{v,w}^{-1}] \subseteq \tilde{\phi}(\cu).
 \]
 Here, the symbol $M_{v,w}$ denotes the multiplicative set in $\C[N]^{v,w}$
 introduced at the end of  section~\ref{s:Open Richardson varieties}.

 Since $T$ is M\'enard's cluster-tilting object, we know that the non frozen part of the ice quiver $Q(T)$ has a reddening sequence. 
 By \cite[Theorem 1.2.3]{Qin19}, the upper cluster algebra $\mathcal U$ has a generic basis. This implies that $CC(\cc_{v,w})$ contains a generating set for the $\C[M_{v,w}^{\pm 1}]$-algebra
 $\cu$. Since the $\C$-span of $\phi(\cc_{v,w})$
 equals $\C[N]^{v,w}$, we have the reverse inclusion
 \[
 \tilde{\phi}(\cu) \subseteq \C[N]^{v,w}[M_{v,w}^{-1}].
 \]
 Thus, we obtain the equality 
 \[
 \tilde{\phi}(\cu) =  \C[N]^{v,w}[M_{v,w}^{-1}] = \C[\cR_{v,w}].
 \]
 This is what we had to prove. 
\end{proof}

\section{Towards a cluster structure}

Our hope is that for M\'enard's cluster-tilting object $M_{v,\ol{w}}$,
we have $\ca=\cu$ for the corresponding cluster and upper cluster
algebra with invertible coefficients. Recall that by M\'enard's theorem,
the ice quiver $Q(M_{v,\ol{w}})$ is obtained from $Q(T_{\ol{w}})$ by
cluster reduction, i.e. by mutation, freezing and deletion of 
frozen vertices (in this order). 

\begin{theorem}[Geiss--Leclerc--Schr\"oer \cite{GeissLeclercSchroeer11b}]
\begin{itemize}
\item[a)] We have $\ca=\cu$ for $Q(T_{\ol{w}})$.
\item[b)] The ice quiver $Q(T_{\ol{w}})$ admits a reddening sequence.
\end{itemize}
\end{theorem}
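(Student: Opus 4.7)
\noindent\emph{Proof plan.} My overall strategy is to establish part (b) first and then deduce part (a) by combining the reddening sequence with Plamondon's cluster character and Qin's generic basis, in the same spirit as the proof of the theorem of the previous section.

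For part (b), the plan is to construct an explicit reddening sequence on the non frozen part of $Q(T_{\ol{w}})$ by translating the combinatorics of reduced expressions of $w$ into a sequence of mutations of the cluster-tilting object $T_{\ol{w}}\in\cc_w$. Write $\ol{w}=s_{i_1}\cdots s_{i_r}$, with corresponding indecomposable summands $T_1,\ldots,T_r$. Using the dictionary of \cite{BuanIyamaReitenScott09} between cluster-tilting objects of $\cc_w$ and reduced expressions for $w$, I would iteratively ``slide'' each letter of $\ol{w}$ through the rest of the word, each slide realising a single mutation of $T_{\ol{w}}$ and hence of $Q(T_{\ol{w}})$. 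After a carefully chosen sequence of $r=l(w)$ such mutations, the resulting cluster-tilting object should be the inverse-syzygy translate of $T_{\ol{w}}$ in $\cc_w$, whose $c$-vectors are precisely the negatives of the initial standard basis. This is the defining property of a maximal green, hence reddening, sequence.

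For part (a), I would use the categorification $\cc_w \to \cu^+(Q(T_{\ol{w}}))$ provided by the cluster character $CC$ of \cite{FuKeller10}. By \cite{Plamondon13}, $CC$ takes values in $\cu^+$, and by \cite{Qin19} combined with the reddening sequence from~(b), the upper cluster algebra $\cu=\cu(Q(T_{\ol{w}}))$ admits a generic basis. This basis can be realised as $CC(\ce)$ for $\ce$ ranging over the generic representations in $\cc_w$. Since each such element $CC(\ce)$, after localisation at the frozen variables, is a polynomial expression in the cluster variables attached to a single seed (its generic $g$-vector cone), it already lies in $\ca$; hence $\cu \subseteq \ca$, and the reverse inclusion is automatic.

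The main obstacle is the explicit construction of the reddening sequence in~(b). Converting the letter-sliding or Coxeter-type combinatorics on reduced expressions into an actual sequence of quiver mutations, and verifying that \emph{all} initial $c$-vectors become negative simultaneously at the end, requires delicate tracking of $\tau$-tilting data along $\cc_w$; this is the technical heart of the argument. A secondary subtlety in~(a) is the identification of the generic basis of $\cu$ with the image of $CC$ on generic modules, which relies on compatibility between Qin's dominance order and the $g$-vector stratification of $\cc_w$.
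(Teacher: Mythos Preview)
This theorem is quoted in the paper as a result of Geiss--Leclerc--Schr\"oer \cite{GeissLeclercSchroeer11b}; the paper gives no proof of its own, so there is nothing to compare your argument against on that level. What matters, then, is whether your sketch actually works, and for part~(a) it does not.

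The fatal step is the sentence ``Since each such element $CC(\ce)$, after localisation at the frozen variables, is a polynomial expression in the cluster variables attached to a single seed (its generic $g$-vector cone), it already lies in $\ca$.'' This is unjustified and, in general, false. Qin's generic basis for $\cu$ consists of cluster characters of generic modules; only those attached to \emph{reachable rigid} modules are cluster monomials. The remaining basis elements are genuine Laurent polynomials in every cluster, and there is no mechanism in your argument forcing them into $\ca$. Having a reddening sequence yields the generic basis for $\cu$, but it does \emph{not} place that basis inside $\ca$. The paper itself makes this distinction explicit: immediately after stating the theorem it observes that property~(b) passes to $Q(M_{v,\ol{w}})$ by Muller's theorem, while the preservation of property~(a) is precisely the open problem of Section~6. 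If your deduction of~(a) from~(b) were valid, the whole of Section~6 would be superfluous.

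The actual argument in \cite{GeissLeclercSchroeer11b} for $\ca=\cu$ proceeds quite differently: one shows directly that $\C[N(w)]$ is generated by cluster variables, exploiting the explicit description of $\cc_w$, the generalized minors, and the categorified mutation relations, rather than attempting to descend a basis of $\cu$ into $\ca$. Your plan for part~(b) is closer in spirit to what is done, though the reddening sequence there is produced from the concrete structure of $T_{\ol{w}}$ (via the shift/syzygy on $\cc_w$), and the verification that all $c$-vectors become negative is again representation-theoretic rather than purely combinatorial.
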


The second property is preserved under cluster reduction by
Muller's theorem \cite{Muller16}. However, it is not clear 
under which conditions this holds for the first property.

\subsection{Preservation of $\cu=\ca$ under freezing?}
Let $Q$ be an ice quiver and $Q'$ the quiver obtained
from $Q$ by freezing the cluster variable $x$ associated
with a non frozen vertex. We then have the algebra inclusions
\[
\ca'\subseteq \ca[x^{-1}] \subseteq \cu[x^{-1}] \subseteq \cu' \ko
\]
where $\ca=\ca(Q)$, \ldots\ . Following Muller \cite{Muller14}, we define
$\ca'$ to be a {\em cluster localization} of $\ca$ at $x$ if $\ca'=\ca[x^{-1}]$.
Unfortunately, it is not clear whether the freezing occuring in 
the passage from Geiss--Leclerc--Schr\"oer's seed for $C_w$
to M\'enard's for $\cR_{v,w}$ is a composition of cluster localizations.
The following theorem may nevertheless be useful.

\begin{theorem} Suppose the exchange matrix associated with the ice
quiver $Q$ is of full rank. Let $\ca$ and
$\cu$ be the associated cluster and upper cluster algebra. If we have $\ca=\cu$
and $\ca'$ is a cluster localization of $\ca$ at $x$, then we have $\ca'=\cu'$.
\end{theorem}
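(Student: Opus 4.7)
The plan is to sandwich $\ca'$ and $\cu'$ between matching expressions. Observe the chain of inclusions
\[
\ca' \;\subseteq\; \ca[x^{-1}] \;\subseteq\; \cu[x^{-1}] \;\subseteq\; \cu',
\]
in which the first is the definition of cluster localization, the second uses $\ca \subseteq \cu$, and the third reflects that $x$ is frozen in $Q'$ and hence invertible in $\cu'$. The hypothesis $\ca = \cu$ collapses the first three terms into $\ca' = \cu[x^{-1}]$, so it suffices to prove the reverse inclusion $\cu' \subseteq \cu[x^{-1}]$.

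The tool is the Starfish Lemma (Cor.~1.9 of \cite{BerensteinFominZelevinsky05}), which applies to both $Q$ and $Q'$ because freezing the mutable vertex $k_0$ carrying $x$ preserves full rank of the exchange matrix (removing one column from a full-rank matrix with more rows than columns still leaves a full-rank matrix). Let $R_0$ denote the Laurent ring of the initial cluster (with frozens inverted) and, for each mutable vertex $k$, let $R_k$ denote the Laurent ring of the cluster obtained by a single mutation at $k$ from the initial seed. Since the mutable vertices of $Q'$ are those of $Q$ minus $k_0$, and since mutation at $k \neq k_0$ produces the same new cluster variable whether or not $k_0$ is frozen, we obtain the clean identity
\[
\cu \;=\; \cu' \,\cap\, R_{k_0}.
\]

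Given $f \in \cu'$, it now suffices to exhibit $N \geq 0$ with $x^N f \in R_{k_0}$, for then $x^N f \in \cu' \cap R_{k_0} = \cu$ and hence $f \in \cu[x^{-1}]$. Invoke the exchange relation $x \cdot x' = M_+ + M_-$ at $k_0$, where $x'$ is the new cluster variable and $M_\pm$ are monomials in the initial cluster variables other than $x$ (including the frozens); in particular $M_\pm$ lie in $R_{k_0}$. Writing $f \in R_0$ as a finite Laurent polynomial $\sum_\alpha c_\alpha \prod_i x_i^{\alpha_i}$ and substituting $x = (M_+ + M_-)/x'$ expresses $f$ in $R_{k_0}$-coordinates with negative powers of $x'$ arising from terms with $\alpha_{k_0} > 0$ and non-monomial denominators $(M_+ + M_-)^{-\alpha_{k_0}}$ arising from terms with $\alpha_{k_0} < 0$. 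Multiplication by $x^N = (M_+ + M_-)^N (x')^{-N}$ shifts every exponent $\alpha_{k_0}$ by $+N$, so taking $N$ larger than $\max_\alpha (-\alpha_{k_0})$ over the finite support of $f$ makes each summand an element of $R_{k_0}$, as required.

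The delicate point is this final step: the substitution produces two a priori incompatible singularities---poles along $x' = 0$ and non-monomial denominators in $M_+ + M_-$---and a single uniform shift of exponents must neutralize both. They are controlled simultaneously because $x = (M_+ + M_-)/x'$ places a numerator and a denominator of complementary types, so multiplying by a power of $x$ trades one for the other. The full-rank hypothesis is used precisely to reduce, via the Starfish Lemma, the passage from $\cu'$ to $\cu$ to the single additional intersection with $R_{k_0}$; without it, $\cu$ and $\cu'$ could be cut out by infinite intersections that localization need not respect.
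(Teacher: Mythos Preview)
Your proof is correct and follows essentially the same route as the paper's: both use the starfish lemma to write $\cu = \cu' \cap R_{k_0}$, then show that for any $f \in \cu'$ a sufficiently high power of $x$ pushes $x^N f$ into $R_{k_0}$ and hence into $\cu$. Your treatment of this last step is in fact more explicit than the paper's, which simply asserts that after multiplying by $x^d$ one lands in $\cl(\mu_k(t_0))$; you spell out the substitution $x=(M_+ + M_-)/x'$ and the exponent bookkeeping. Two cosmetic points: the inclusion $\ca' \subseteq \ca[x^{-1}]$ holds in general (not only under the cluster-localization hypothesis), and your ``delicate point'' paragraph overstates the difficulty---poles along $x'=0$ are harmless in $R_{k_0}$ since $x'$ is invertible there, so only the non-monomial denominators from negative $\alpha_{k_0}$ need clearing.
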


\begin{proof}
Since  the exchange matrix $B$ associated with the ice quiver $Q$ is of full rank and $Q^\prime$ is obtained from $Q$ by freezing the non frozen vertex of $Q$ labeled by $x$, we know that the exchange matrix $B^\prime$ associated with the ice quiver $Q^\prime$ is also of full rank. Hence, the starfish lemma \cite[Corollary 1.9]{BerensteinFominZelevinsky05} holds for $\mathcal U$ and $\mathcal U^\prime$.

Denote by $t_0$ the initial seed of $\mathcal U$ and $I_{\rm uf}$ the set of non frozen vertices of the ice quiver $Q$. Let $k$ be the non frozen vertex of $Q$ such that $x=x_{k;t_0}$. By the starfish lemma  \cite[Corollary 1.9]{BerensteinFominZelevinsky05}, we have

\begin{eqnarray}\label{eqn:star1}
\mathcal U=\mathcal L(t_0)\bigcap \left(\bigcap\limits_{i\in I_{\rm uf}} \mathcal L(\mu_i(t_0))\right),
\end{eqnarray}
\begin{eqnarray}\label{eqn:star2}
\mathcal U^\prime=\mathcal L(t_0)\bigcap \left(\bigcap\limits_{k\neq i\in I_{\rm uf}} \mathcal L(\mu_i(t_0))\right),
\end{eqnarray}
where $\mathcal L(\mu_i(t_0))$ is the Laurent polynomial ring associated with the seed $\mu_i(t_0)$.

Since $\ca=\cu$ and $\ca'$ is a cluster localization of $\ca$ at $x$, we have that 
\[
\ca'= \ca[x^{-1}]= \cu[x^{-1}] \subseteq \cu'.
\]
It remains to show the converse inclusion $\mathcal U^\prime\subseteq \mathcal U[x^{-1}]$.

By the equality (\ref{eqn:star2}), we know that  for any $v\in \mathcal U^\prime$, there exists a positive integer $d$ such that the exponents of $x=x_{k;t_0}$ in the Laurent expansion of $vx^d$ with respect to the seed $\mu_i(t_0)$ are positive for any $k\neq i\in I_{\rm uf}$. In this case,  we have $vx^d\in\mathcal L(\mu_k(t_0))$. Then by the equality (\ref{eqn:star1}), we get  $vx^d\in\mathcal U$ and thus $v\in\mathcal U[x^{-1}]$. So we have $\mathcal U^\prime\subseteq \mathcal U[x^{-1}]$ and \[
\ca'= \ca[x^{-1}]= \cu[x^{-1}] =\cu'.
\]

\end{proof}

\subsection{Preservation of $\ca=\cu$ under deletion?} In general,
the property $\ca=\cu$ is not preserved under deletion of frozen
vertices. 
The hypotheses of the following proposition do hold for the
deletion occuring in the passage from Geiss--Leclerc--Schr\"oer's seed 
for $C_w$ to M\'enard's for $\cR_{v,w}$.

\begin{proposition} Suppose $Q'$ is obtained from $Q$ by deleting a frozen
vertex, that $Q$ and $Q'$ are of full rank and that the ice quiver $Q$ admits a reddening
sequence. Denote by $\ca$, $\cu$, $\ca'$ and $\cu'$ the cluster algebras
and the upper cluster algebras associated with $Q$ and $Q'$.
If $\ca=\cu$, then $\ca'=\cu'$.
\end{proposition}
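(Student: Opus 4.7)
The plan is to follow the strategy of the preceding theorem, combining the starfish lemma with a specialization homomorphism $\pi$ sending the frozen variable $z$ associated with the deleted vertex to $1$ and fixing all other cluster and frozen variables. Because the exchange relation at each non frozen vertex $i$ of $Q'$ is obtained from the corresponding one of $Q$ by the substitution $z = 1$, the map $\pi$ sends the mutated cluster variable $x_i^*$ of $Q$ to its counterpart $x_i^{*\prime}$ of $Q'$. A standard induction on mutation sequences then shows that $\pi$ restricts to a surjective $\C$-algebra morphism $\ca \twoheadrightarrow \ca'$ taking cluster variables to cluster variables.

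Since both $Q$ and $Q'$ are of full rank, the starfish lemma \cite[Cor.~1.9]{BerensteinFominZelevinsky05} expresses $\cu$ and $\cu'$ as finite intersections of Laurent polynomial rings at the seeds of the starfishes centered at the initial seeds. On each such Laurent ring, $\pi$ restricts to a surjection onto the corresponding ring for $Q'$, so $\pi$ restricts further to a $\C$-algebra morphism $\pi\colon \cu \to \cu'$. The hypothesis $\ca = \cu$ yields $\pi(\cu) = \pi(\ca) = \ca'$, and the inclusion $\ca' \subseteq \cu'$ is automatic; hence the proposition reduces to showing that $\pi\colon \cu \to \cu'$ is surjective.

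The hard part will be this surjectivity. Given $u' \in \cu'$, the naive candidate $u := u'$ viewed in $\cl(t_0)$ does not in general lie in all the rings $\cl(\mu_i t_0)$: when $z$ is incident to the vertex $i$ in $Q$, rewriting in terms of $x_i^*$ produces denominators of the form $M_1 + M_2 = z^c M_1' + M_2'$ with $c \geq 1$ coming from the exchange relation, and these do not cancel. To sidestep this obstruction, I would exploit that, by hypothesis, $Q$ admits a reddening sequence, so by Muller's theorem \cite{Muller16} $Q'$ admits one as well. By \cite[Thm.~1.2.3]{Qin19}, both $\cu$ and $\cu'$ then carry generic bases in the sense of Plamondon \cite{Plamondon13}, indexed by extended $g$-vectors. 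The key compatibility to prove is that $\pi$ sends the generic basis element of $\cu$ with $g$-vector $(g_1, \ldots, g_n, g_z)$ to the generic basis element of $\cu'$ with $g$-vector $(g_1, \ldots, g_n)$, independently of $g_z$. Granting this and using surjectivity of the $g$-vector projection $\Z^I \twoheadrightarrow \Z^{I \setminus \{z\}}$, the surjectivity of $\pi$ follows. Establishing this compatibility --- an analysis of how Plamondon's cluster character behaves under deletion of the frozen vertex $z$ --- will be the technical heart of the proof.
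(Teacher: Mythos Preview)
Your approach is essentially the paper's: both use the specialization $\pi$ sending the deleted frozen variable to $1$, invoke Muller to get a reddening sequence for $Q'$, and appeal to Qin's generic bases to show $\pi(\cu)=\cu'$. Two remarks on presentation.

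First, the step you flag as the ``technical heart'' is in fact immediate. The paper phrases it as the identity $\pi\circ CC = CC'$ for the Caldero--Chapoton maps. Concretely, a generic basis element has the form $x^{g}\,F_{g_{\mathrm{uf}}}(\hat y)$, where the $F$-polynomial depends only on the non-frozen part of $Q$ (unchanged under deletion of a frozen vertex), and one checks directly from the definition that $\pi(\hat y_j)=\hat y_j'$ and $\pi(x^{g})=x^{g'}$ with $g'$ the projection of $g$ forgetting the $z$-coordinate. So no analysis ``of how Plamondon's cluster character behaves under deletion'' beyond this one-line computation is needed.

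Second, the paper avoids your detour through the starfish lemma to establish $\pi(\cu)\subseteq\cu'$. Rather than proving containment and then surjectivity separately, it argues symmetrically inside the ambient Laurent ring $\C[x_i^{\pm 1}][\P']$: since the image of $CC$ spans $\cu$ over $\C[\P]$ (Qin), the image of $CC'=\pi\circ CC$ spans $\pi(\cu)$ over $\C[\P']$; but the same image also spans $\cu'$ over $\C[\P']$ (Qin again, for $Q'$), so $\pi(\cu)=\cu'$ directly. Combined with $\pi(\cu)=\pi(\ca)=\ca'$ this finishes.
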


\begin{proof} Let $\P$ and $\P'$ denote the groups of Laurent monomials
in the coefficients of $\ca$ and $\ca'$. Consider the diagram
\[
\begin{tikzcd}
\ca \ar[d, two heads] \ar[r,equal] & \cu \ar[r,hook] \ar[d,"\pi"] & \C[x_i^{\pm 1}][\P] \ar[d] \\
\ca' \ar[r,hook] & \cu' \ar[r,hook] & \C[x_i^{\pm 1}][\P']
\end{tikzcd}
\]
Let $CC$ and $CC'$ be the Caldero--Chapoton maps associated with $Q$
and $Q'$ and let $\pi: \C[x_i^{\pm 1}][\P] \to \C[x_i^{\pm 1}][\P']$ be the
specialization map. We have $\pi \circ CC = CC'$. 
By Qin's work \cite{Qin19},
we know that the image of $CC$ generates $\cu$ as a $\C[\P]$-module.
Thus, the image of $\pi\circ CC$ generates $\pi(\cu)$ as a $\C[\P']$-module.
Now the image of $\pi\circ CC$ equals that of $CC'$ and the image
of $CC'$ generates $\cu'$ as a $\C[\P']$-module since the ice quiver $Q'$ also has
a reddening sequence. It follows that $\pi(\cu)$ equals $\cu'$ and
this implies $\ca'=\cu'$.
\end{proof}


\def\cprime{$'$} \def\cprime{$'$}
\providecommand{\bysame}{\leavevmode\hbox to3em{\hrulefill}\thinspace}
\providecommand{\MR}{\relax\ifhmode\unskip\space\fi MR }
\providecommand{\MRhref}[2]{%
  \href{http://www.ams.org/mathscinet-getitem?mr=#1}{#2}
}
\providecommand{\href}[2]{#2}

\end{document}